\newtheorem{theorem}{Theorem}[section]
\newtheorem{remark}[theorem]{Remark}
\newtheorem{definition}[theorem]{Definition}
\newenvironment{proof}{\noindent{\sc Proof.}}{\quad\qed\medskip}
\newcommand{\Z}{{\mathbb Z}}
\newcommand{\qed}{\quad\lower0.05cm\hbox{$\Box$}}
\newcommand{\downarrowright}[1]{\downarrow
\rlap{\raise0.1cm\hbox{$\scriptstyle{#1}$}}}
\newcommand{\downarrowleft}[1]{\rlap{\kern-0.2cm
\raise0.1cm\hbox{$\scriptstyle{#1}$}}\downarrow}
\newcommand{\uparrowright}[1]{\uparrow
\rlap{\lower0.1cm\hbox{$\scriptstyle{#1}$}}}
\newcommand{\uparrowleft}[1]{\rlap{\kern-0.2cm
\lower0.1cm\hbox{$\scriptstyle{#1}$}}\uparrow}
\newcommand{\ra}{\rightarrow}
\begin{document}
\setcounter{page}{1}
\title{Bredon homology of wallpaper groups}
\footnotetext{
{\bf Mathematics Subject Classification (2020)}: Primary: 20F65;  Secondary: 20C15, 55N91.

{\bf Keywords:} Wallpaper group, Bredon homology, character, representation, $G$-CW-complex.

The author was supported by grant PID2020-117971GB-C21 of the Spanish Ministery of Science and Innovation, and grant FQM-213 of the Junta de Andaluc\'{\i}a.}

\author{Ram\'on Flores}

\date{November 28, 2021}
\maketitle

%%%%%%%%%%%%%%%%%%%%%%%%%%%%%%%%%%%%%%%%%%%%%%%%%%%%%%%%%%%%%%%%%
%                         ABSTRACT                              %
%%%%%%%%%%%%%%%%%%%%%%%%%%%%%%%%%%%%%%%%%%%%%%%%%%%%%%%%%%%%%%%%%
\begin{abstract}

In this paper we compute the Bredon homology of wallpaper groups with respect to the family of finite groups and with coefficients in the complex representation ring. We provide explicit bases of the homology groups in terms of irreducible characters of the representation rings of the stabilizers.

\end{abstract}

%%%%%%%%%%%%%%%%%%%%%%%%%%%%%%%%%%%%%%%%%%%%%%%%%%%%%%%%%%%%%%%%%
%                       INTRODUCTION                            %
%%%%%%%%%%%%%%%%%%%%%%%%%%%%%%%%%%%%%%%%%%%%%%%%%%%%%%%%%%%%%%%%%

\setcounter{equation}{0}

%%%%%%%%%%%%%%%%%%%%%%%%%%%%%%%%%%%%%%%%%%%%%%%%%%%%%%%%%%%%%%%%%
%%%%%%%%%%%%%%%%%%%%%%%%%%%%%%%%%%%%%%%%%%%%%%%%%%%%%%%%%%%%%%%%%
\section{Introduction}

Bredon homology is one of the main instances of equivariant homology theory. Roughly speaking (see definitions in Section \ref{Prelim}), given a $G$-space $X$ and a family $\mathcal{F}$ of stabilizers of the action, the homology groups depend on a coefficient module $N$ which takes values in abelian groups and takes account of the structure of the family $\mathcal{F}$. After their development by G. Bredon in the sixties for the case of $G$ finite, different choices of $\mathcal{F}$ have showed different roles of this homology theory in contexts as equivariant obstruction theory \cite{May96}, partition complexes \cite{ADL16}, stable homotopy \cite{GM95} or group dimension theory \cite{KMN11}. The computation of the Bredon homology groups has been particularly important in relation with the Isomorphism Conjectures (see a survey in \cite{LR05}), as they may permit the computation of $K$-theory groups via an equivariant version of the Atiyah-Hirzebruch spec
tral sequence.
In this framework, when $X$ is the classifying space $\underline{E}G$ for proper actions of $G$ (see Section \ref{Classifying} below),the Bredon homology of $\underline{E}G$ is an invariant of the group $G$, and in this case we will directly say the ``Bredon homology of $G$".

We deal in the sequel with the Bredon homology of the crystallographic groups of the plane (also called \emph{wallpaper groups}), with respect to the family of finite subgroups and with coefficients in the complex representation ring. This coefficient module codifies the complex representation theory of the finite subgroups of the group, and is relevant in relation with the Baum-Connes conjecture, the best known of the Isomorphism Conjectures. Recall that this statement identifies the equivariant $K$-homology of the space $\underline{E}G$ with the $K$-theory of the reduced $C^*$-algebra of $G$. The conjecture is true for wallpaper groups, as they are solvable (see Section \ref{Wallpaper}), and the corresponding values of $K_*(C^*_r G)$ were computed in his thesis by Yang (\cite{Yan97} (see also L\"{u}ck-Stamm \cite{LuSt00}, where in particular a little mistake in Yang's results is corrected). Other computations of Bredon homology in the context of Baum-Connes conjecture may be found for example in \cite{San08}, \cite{LORS18} or \cite{AnFo14}.

The main goal of this paper is to offer \emph{explicit} computations of the Bredon homology group of wallpaper groups. By ``explicit" we mean giving bases of the homology groups (as abelian groups) in terms of irreducible characters of representations of finite stabilizers of the action of the groups, as well as a detailed description of the Bredon complex and the corresponding differentials. Aside the information which is obtained in this way about the representation theory of the group, the motivation of the study has come from the following problem. Consider a group $G$ which is a colimit of wallpaper groups, and try to compute the left-hand side of Baum-Connes for $G$ (this is, for example, the case of different extensions of $SL(2,\Z)$ by $\Z^2$). A possible strategy is to obtain the Bredon homology of $G$ out of the Bredon homologies of the wallpaper groups involved in the colimit, but this computation involve a precise knowledge of the induced homomorphisms in homology, and in particular of concrete generators of each group in the diagram, which is the kind of information that our study provides. Moreover, we also expect that our results may permit a sharp description of the Baum-Connes' assembly map for wallpaper groups, as in for example \cite{ABGRW} \cite{FPV17} or \cite{Poo19}. Finally, we are aware that Yang and L\"{u}ck-Stamm results', and also the computational approach by Bui-Ellis \cite{BuEl16}, provide the isomorphism type (as abstract groups) of some of this Bredon homology groups, but as said above, we believe that the main contribution of the present paper is the explicit description of the groups.

The structure of the paper is as follows. In Section \ref{Prelim} we give the necessary information about wallpaper groups, classifying spaces, Bredon homology, representation theory and Smith normal forms in order to make the paper as self-contained as possible. In particular, Section \ref{Representation} contains all the relevant information about the representation theory of the stabilizers. Section \ref{Computations} contains all the computations of Bredon homology groups, with a little introduction in which we explain the steps we follow in each calculation.

\textbf{Acknowledgments}. We warmly thank D. Schattschneider for the permission to reproduce the picture (Figure \ref{Doris}) of the patterns of the wallpaper groups that appear in the nice paper \cite{Sch78}. We also thank J. Gonz\'alez-Meneses for helpful comments, and S. Pooya, A. Zumbrunnen and in particular A. Valette for providing the motivation for this work.

%%%%%%%%%%%%%%%%%%%%%%%%%%%%%%%%%%%%%%%%%%%%%%%%%%%%%%%%%%%%%%%%
\section{Preliminaries}
\label{Prelim}
%%%%%%%%%%%%%%%%%%%%%%%%%%%%%%%%%%%%%%%%%%%%%%%%%%%%%%%%%%%%%%%%%%%

\subsection{Wallpaper groups}
\label{Wallpaper}

In this subsection we recall the main features of the wallpaper groups, which are the main object of study of this paper. Nice surveys of the theory can be found in \cite{Sch78} or \cite{Mac85}, while presentations by generators and relations for all the wallpaper groups are given in \cite{DdSS99}.

 We start with the definition:

\begin{definition}

A discrete $G$ of isometries of the plane $\mathbb{R}^2$ is called a \emph{wallpaper group} if the action of $G$ on the plane is properly discontinuous and the quotient $\mathbb{R}^2/G$ is compact.

\end{definition}

There are exactly seventeen non-isomorphic wallpaper groups, as was independently proved by Fedorov and Schoenflies. Every such group $G$ is in particular defined by an extension:

$$\Z^2\rightarrow G\ra  F,$$ where $F$ is a finite group, called the \emph{point group} of the wallpaper group. The generators of the free abelian group correspond to two independent translations, and the images of a certain compact pattern of the plane by these translations tessellate it (this is the reason of the name \emph{wallpaper}). In Figure \ref{Doris} such patterns are pictured for all these groups, and the fact that they tessellate, and then contain a fundamental domain for the action, will be often used implicitly in Section 4, when a representative of class of equivariant 2-cells for a $G$-CW complex structure is defined for every wallpaper group.

These groups can also possess rotations, reflections and glide-reflections, corresponding in particular rotations and reflections with torsion elements of the groups. In Figure \ref{Doris} the rotation centers contained in the pattern can be observed, as well as the reflection and glide-reflection axes. We will use this picture as a major source of information in the computations of Section \ref{Computations}.

In Table \ref{Wallpaper groups} we have compiled some relevant information about the wallpaper groups that will useful for us. In the second column there and throughout the paper we will denote by $D_n$ the dihedral group of $2n$ elements. In turn, the cyclic group of $n$ elements will be denoted indistinctly  by $C_n$ or $\Z /n$ in all the sequel. The third column of the table explain if the extension that defines the group splits or not. In the fourth the torsion primes of every group can be found: observe that $\mathbf{p1}$ and $\mathbf{pg}$ are the only torsion-free instances. Finally, in the fifth column, we distinguish 2- and 4- rotations when there are 2-rotations which in the groups that have no roots; and same for 2-,3- and 6-rotations in $\mathbf{p6}$ and $\mathbf{p6m}$.

In next section we will define the classifying space for proper actions, which is fundamental in our computations, and in particular we will recall the appropriate model of this space for wallpaper groups.

\begin{figure}
\label{Doris}
\includegraphics{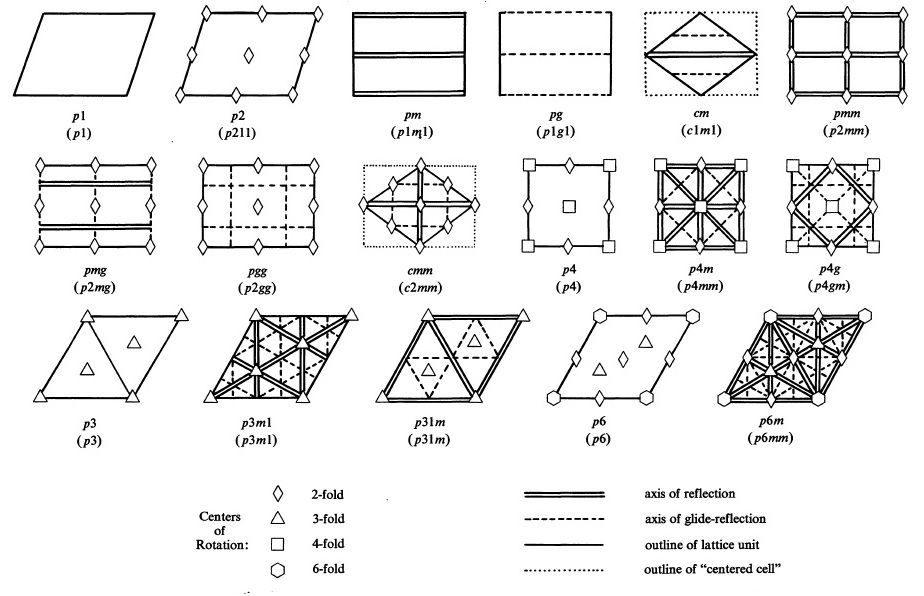}
\caption{Patterns for the wallpaper groups}
\end{figure}

\begin{table}[t]
\begin{center}
\begin{tabular}{| c | c | c | c | c | c | c |}
\hline
Group & Point group & Split & Torsion & Rotations & Reflections & Glide-reflections \\ \hline

$\mathbf{p1} $  & $ \{1\} $ & - & No & No & No & No \\ \hline
$\mathbf{p2} $ & $ C_2 $ & Yes & 2 & 2- & No & No \\ \hline
$\mathbf{pm} $ & $ C_2 $ & Yes & 2 & No & Yes & No \\ \hline
$\mathbf{pg} $ & $ C_2 $ & No & No & No & No & Yes \\ \hline
$\mathbf{cm} $ & $ C_2 $ & Yes & 2 & No & Yes & Yes \\ \hline
$\mathbf{pmm} $ & $ D_2 $ & Yes & 2 & 2- & Yes & No \\ \hline
$\mathbf{pmg} $ & $ D_2 $ & No & 2 & 2- & Yes & Yes \\ \hline
$\mathbf{pgg} $ & $ D_2 $ & No & 2 & 2- & No & Yes \\ \hline
$\mathbf{cmm} $ & $ D_2 $ & Yes & 2 & 2- & Yes & Yes \\ \hline
$\mathbf{p4} $ & $ C_4 $ & Yes & 2 & 2-,4- & No & No \\ \hline
$\mathbf{p4m} $ & $ D_4 $ & No & 2 & 2-,4- & Yes & Yes \\ \hline
$\mathbf{p4g} $ & $ D_4 $ & Yes & 2 & 2-,4- & Yes & Yes \\ \hline
$\mathbf{p3} $ & $ C_3 $ & Yes & 3 & 3- & No & No \\ \hline
$\mathbf{p3m1} $ & $ D_3 $ & Yes & 2,3 & 3- & Yes & Yes \\ \hline
$\mathbf{p31m} $ & $ D_3 $ & Yes & 2,3 & 3- & Yes & Yes \\ \hline
$\mathbf{p6} $ & $ C_6 $ & Yes & 2,3 & 2-,3-,6- & No & No \\ \hline
$\mathbf{p6m} $ & $ D_6 $ & Yes & 2,3 & 2-,3-,6- & Yes & Yes \\ \hline

\end{tabular}
\caption{Wallpaper groups}
\label{Wallpaper groups}
\end{center}
\end{table}

\newpage

\subsection{Classifying for proper actions}
\label{Classifying}

The main geometric object in this paper is the classifying space for proper actions, so we will recall the necessary definitions here. We refer the reader to \cite{Luc05} for a thorough exposition about the subject, and to \cite{TDieck} for generalities about group actions and $G$-CW-complexes.

\begin{definition}

Let $G$ a discrete group, $\mathcal{F}$ a family of subgroups closed under conjugation and subgroups. Then a $G$-CW-complex $E_{\mathcal{F}}G$ is called a \emph{classifying space for the family} $\mathcal{F}$ if given a subgroup $H<G$, the fixed point set $E_{\mathcal{F}}G^H$ is contractible if $H\in\mathcal{F}$ and empty otherwise.

\end{definition}

Observe that the definition implies that the stabilizers of the action of $G$ belong to $\mathcal{F}$. It can be proved that this space is unique up to $G$-homotopy equivalence.

When $\mathcal{F}$ is the trivial family, $E_{\mathcal{F}}G=EG$, the universal space for $G$-principal bundles. On the other hand, when $\mathcal{F}$ is the family of finite subgroups of $G$, $E_{\mathcal{F}}G$ is usually denoted by $\underline{E}G$, and called the  \emph{classifying space for proper actions of}  $G$. As stated in the introduction, this is the main object of interest of the left-hand side of Baum-Connes, and the computation of its Bredon homology for wallpaper groups $G$ is the main goal of this paper. In fact, these computations are feasible because there is a very simple model available for $E_{\mathcal{F}}G$ in this case:

\begin{theorem}

Let $G$ be a wallpaper group, and consider the usual action of $G$ on the plane via isometries. Then $\mathbb{R}^2$ is a model for $\underline{E}G$.

\end{theorem}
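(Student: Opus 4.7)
The plan is to verify the two defining properties of the classifying space for proper actions: contractibility of $(\mathbb{R}^2)^H$ for finite $H\leq G$, and emptiness for infinite $H$. The fundamental domain visible in Figure \ref{Doris} equips $\mathbb{R}^2$ with a $G$-CW-complex structure, so it only remains to analyse the fixed point sets.

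For the contractibility part, I would first observe that any finite subgroup $H$ of $\mathrm{Isom}(\mathbb{R}^2)$ fixes a point, namely the centroid of the orbit of any chosen point under $H$; by conjugation I may assume this point is the origin, so $H$ is a finite subgroup of $O(2)$. The finite subgroups of $O(2)$ are well known to be either cyclic (generated by a rotation or a reflection) or dihedral. A short case analysis then gives the fixed set explicitly:
\begin{itemize}
\item $H=\{1\}$: fixed set equals $\mathbb{R}^2$.
\item $H$ generated by a nontrivial rotation: fixed set is the single rotation centre.
\item $H$ generated by a reflection: fixed set is the reflection axis, a line.
\item $H$ dihedral: all reflection axes pass through the common rotation centre, so the fixed set is this single point.
\end{itemize}
In every case the fixed set is either a point, a line, or the whole plane, all of which are contractible.

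For the emptiness part, I would use the extension $\mathbb{Z}^2\to G\to F$ from Section \ref{Wallpaper}. Given an infinite subgroup $H\leq G$, consider $H\cap\mathbb{Z}^2$. Since $H/(H\cap\mathbb{Z}^2)$ embeds into the finite point group $F$, the assumption that $H$ is infinite forces $H\cap\mathbb{Z}^2$ to be infinite, hence to contain a nontrivial translation. A nontrivial translation of $\mathbb{R}^2$ has no fixed points, so $(\mathbb{R}^2)^H=\emptyset$, as required.

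The argument is essentially routine once these two ingredients are in place; the only mildly delicate point is the characterization of finite subgroups of $G$ as conjugates of finite subgroups of $O(2)$ (via the centroid trick), and the dual fact that any infinite subgroup must intersect the translation lattice nontrivially. Both are classical and should be carried out explicitly but briefly. No further genuine obstacle arises.
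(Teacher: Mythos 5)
Your argument is correct, and it is genuinely more informative than what the paper offers: the paper does not prove this statement at all, but simply defers to \cite{ZVC80}, Section 4.2. Your proof is the standard direct verification against the definition, and both halves are sound. For the contractibility half, the centroid trick is exactly the right tool (it works because isometries of $\mathbb{R}^2$ are affine and hence commute with barycenters of finite orbits), and your case analysis of finite subgroups of $O(2)$ correctly yields a point, a line, or the whole plane in every case --- in particular the fixed sets are nonempty, which is part of what contractibility demands. For the emptiness half, the reduction via $H\cap\Z^2$ is clean and correctly handles the one case that might otherwise worry a reader, namely subgroups generated by glide reflections: such a subgroup is infinite and contains no nontrivial element of finite order, but its intersection with the translation lattice is nontrivial (the square of a glide reflection is a translation), so it still acts without fixed points. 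Two small points deserve explicit mention if you write this up: first, the criterion you are invoking (a $G$-CW-complex $X$ with finite stabilizers and the stated fixed-point behaviour is a model for $\underline{E}G$) should be cited, e.g.\ from \cite{Luc05}; second, the $G$-CW structure on $\mathbb{R}^2$ is not merely ``visible in the figure'' --- it is constructed group by group in Section \ref{Computations}, and the fact that the pattern contains a fundamental domain is what guarantees a single equivariant $2$-cell. What your approach buys over the paper's citation is a self-contained proof adapted to the planar case; what the citation buys is coverage of the more general setting of planar discontinuous groups treated in \cite{ZVC80}.
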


\begin{proof}

See \cite{ZVC80}, Section 4.2.

\end{proof}

The good knowledge of the actions of these groups of the plane will make possible the computation of the Bredon homology group. In next section we will recall the necessary definitions of this equivariant Homology Theory.

\subsection{Bredon homology}
\label{Bredon}

We recall in this section the main facts concerning Bredon homology, which is the main invariant we will deal with in this paper. We only review the topological version, following the approach of \cite{San08}; a good exposition that includes the algebraic version can be found in \cite{MiVa03}.

Let $G$ be discrete group, $\mathcal{F}$ a family of groups which is closed under conjugation and taking subgroups. Consider the \emph{orbit category} $O_{\mathcal{F}}(G)$, whose objects are the homogeneous spaces $G/K$, $K\subset G$ with $K\in\mathcal{F}$, and whose morphisms are the $G$-equivariant maps. Then a \emph{left Bredon module} $N$ over $O_{\mathcal{F}}(G)$ is a covariant functor $$N:O_{\mathcal{F}}(G)\ra \textbf{Ab},$$ where $\textbf{Ab}$ is the category of abelian groups.

Now consider a left Bredon module $N$ and a $G$-CW-complex $X$, and assume that all the stabilizers of the $G$-action belong to the family $\mathcal{F}$. Then the \emph{Bredon chain complex} $(C_n,\Phi_n)$ can be defined in the following way. For every $d\geq 0$, consider a set $\{e_i^d\}_{i\in I}$ of representatives of orbits of $d$-cells in $X$, and denote by $stab(e_i^d)$ the stabilizer of $e_i^d$. Then we define the \emph{n-th group of Bredon chains} as $C_d=\bigoplus_{i\in I} N(G/stab(e_i^d))$.

Now, consider a $(d-1)$-face of $e_i^d$, which can be given as $ge$ for a certain $(d-1)$-cell $e$. Then we have an inclusion of stabilizers $g^{-1}stab(e_i^d)g\subseteq stab(e)$. As $g^{-1}stab(e_i^d)g$ and $stab(e_i^d)$ are isomorphic, the previous inclusion induces an equivariant $G$-map $f:G/stab(e_i^d)\ra G/stab(e)$. In turn, as $N$ is a functor, we have an induced homomorphism $N(f):N(G/stab(e_i^d))\ra N(G/stab(e))$. Taking into account that the boundary of $e_i^d$ can be written as $\partial e^d_i=\sum_{j=1}^ne_j^{d-1}g_j$ for certain $g_j\in G$ and using linear extension to all representatives of equivariant $d$-cells, we obtain a differential $\Phi_d:C_d\ra C_{d-1}$ for every $d>0$. So have the following definition:

\begin{definition}
The homology groups of the chain complex $(C_i,\Phi_i)$ will be denoted by $H_i^{\mathcal{F}}(X,N)$ and called \emph{Bredon homology groups} of $G$ with coefficients in $N$ with respect to the family $\mathcal{F}$.
\end{definition}

These groups are an invariant of the $G$-homotopy type of $X$.

In this paper we are interested in Bredon homology with coefficients in the complex representation ring. In the next section we recall the definition of this coefficient module.

\subsection{Representation theory}
\label{Representation}

We refer the reader to the classic book of Serre \cite{Ser77} for all the basic concepts concerning complex representation theory of finite groups and their characters.

 As said above, we compute Bredon homology with respect to the family of the finite subgroups with coefficients in the representation ring Bredon module $\mathcal{R}_{\mathbb{C}}$. This module is defined in the following way. Given a group $G$, the functor $\mathcal{R}_{\mathbb{C}}:Or(G)\rightarrow \mathbf{Ab}$ is defined over objects as $\mathcal{R}_{\mathbb{C}}(G/K)=R_{\mathbb{C}}(K)$. the complex representation ring of $K$. To define the functor over morphisms, observe that for any equivariant map $f:G/K\ra G/H$ there exists $g\in G$ such that $gKg^{-1}\subseteq H$. As $R_{\mathbb{C}}(gKg^{-1})=R_{\mathbb{C}}(K)$, we can define $\mathcal{R}(f):R_{\mathbb{C}}(K)\ra R_{\mathbb{C}}(K)$ by induction from the subgroup inclusion $gKg^{-1}\subseteq H$. A detailed exposition about the properties of this functor can be found in \cite[Section 3]{MiVa03}.

To compute the differentials in the Bredon chain complex, we will need to know the homomorphisms between representation rings that are induced by inclusion of stabilizers in the wallpaper groups. In order to do so, we recall the structure of these rings as abelian groups and bases of irreducible characters in Table \ref{RTS}. There, the first element of each basis will always represent the trivial representation. In the case of $D_3$, which is isomorphic to the symmetric group $S_3$, $\chi_2$ stands for the sign representation and $\chi_3$ for the standard representation. For the dihedral groups, $\phi_i$ stand for the characters that correspond to 2-dimensional irreducible representations.

\begin{table}[t]
\begin{center}
\begin{tabular}{| c | c | c |}
\hline
Group & Representation ring & Basis \\ \hline
$C_n$ & $\Z^n$ & $\langle \chi_1,\ldots,\chi_n\rangle$ \\ \hline
$D_2$ & $\Z^4$ & $\langle \chi_1,\chi_2,\chi_3, \chi_4\rangle$ \\ \hline
$D_3$ & $\Z^3$ & $\langle \chi_1,\chi_2,\chi_3 \rangle$ \\ \hline
$D_4$ & $\Z^5$ & $\langle \chi_1,\chi_2,\chi_3,\chi_4, \phi \rangle$ \\ \hline
$D_6$ & $\Z^6$ & $\langle \chi_1,\chi_2,\chi_3,\chi_4, \phi_1, \phi_2 \rangle$ \\ \hline

\end{tabular}
\caption{Representation theory of stabilizers}
\label{RTS}
\end{center}
\end{table}

 Finally, in Table \ref{Stabilizers} we describe explicitly the homomorphisms between the representation rings of the stabilizers, which are easily obtained using the character tables of the groups and Frobenius reciprocity (see \cite{Ser77}, ch. 2). For the groups in the left-hand side of the homomorphisms (always the trivial group or $C_2$) the generators will be denoted by the letter $\rho$, being $\rho_1$ the trivial representation in the case of $C_2$. The notation $\rho\uparrow$ means, in each line, that we are giving the character induced by $\rho$ via the group inclusion of the left. In lines 10-17, $C_2^1$ is the conjugacy class of an order 2 element with non-trivial roots in $D_n$, while $C_2^2$ corresponds to an element with no non-trivial roots.  For the characters in the right-hand side we keep the notation of Table \ref{RTS}.

\begin{table}[t]
\begin{center}
\begin{tabular}{| c | c | c | c |}
\hline
 & Inclusion & Induced character & Image \\ \hline
1 & $\{1\}\hookrightarrow C_n$ & $\rho\uparrow$ & $\chi_1+\ldots+\chi_n $ \\ \hline
2 & $\{1\}\hookrightarrow D_2$ & $\rho\uparrow$ & $\chi_1+\chi_2+\chi_3+\chi_4 $ \\ \hline
3 & $\{1\}\hookrightarrow D_3$ & $\rho\uparrow$ & $\chi_1+\chi_2+2\chi_3 $ \\ \hline
4 & $\{C_2\}\hookrightarrow C_2$ & $\rho_1\uparrow$ & $\chi_1 $ \\ \hline
5 & $\{C_2\}\hookrightarrow C_2$ & $\rho_2\uparrow$ & $\chi_2 $ \\ \hline
6 & $\{C_2\}\hookrightarrow D_2$ & $\rho_1\uparrow$ & $\chi_1+\chi_2 $ \\ \hline
7 & $\{C_2\}\hookrightarrow D_2$ & $\rho_2\uparrow$ & $\chi_3+\chi_4 $ \\ \hline
8 & $\{C_2\}\hookrightarrow D_3$ & $\rho_1\uparrow$ & $\chi_1+\chi_3 $ \\ \hline
9 & $\{C_2\}\hookrightarrow D_3$ & $\rho_2\uparrow$ & $\chi_2+\chi_3 $ \\ \hline
10 & $\{C_2^1\}\hookrightarrow D_4$ & $\rho_1\uparrow$ & $\chi_1+\chi_2+\chi_3+\chi_4 $ \\ \hline
11 & $\{C_2^1\}\hookrightarrow D_4$ & $\rho_2\uparrow$ & $2\phi $ \\ \hline
12 & $\{C_2^2\}\hookrightarrow D_4$ & $\rho_1\uparrow$ & $\chi_1+\chi_3+\phi $ \\ \hline
13 & $\{C_2^2\}\hookrightarrow D_4$ & $\rho_2\uparrow$ & $\chi_2+\chi_4+\phi $ \\ \hline
14 & $\{C_2^1\}\hookrightarrow D_6$ & $\rho_1\uparrow$ & $\chi_1+\chi_2+2\phi_2 $ \\ \hline
15 & $\{C_2^1\}\hookrightarrow D_6$ & $\rho_2\uparrow$ & $\chi_3+\chi_4+2\phi_1 $ \\ \hline
16 & $\{C_2^2\}\hookrightarrow D_6$ & $\rho_1\uparrow$ & $\chi_1+\chi_3+\phi_1+\phi_2 $ \\ \hline
17 & $\{C_2^2\}\hookrightarrow D_6$ & $\rho_2\uparrow$ & $\chi_2+\chi_4+\phi_1+\phi_2 $ \\ \hline

\end{tabular}
\caption{Induced characters on stabilizers}
\label{Stabilizers}
\end{center}
\end{table}

\subsection{Smith normal form}
\label{SNF}

When computing the Bredon homology of wallpaper groups, the computation of the Smith normal form of a matrix is necessary to describe the groups and also for obtaining explicit bases, which is one of the main goals of this paper. A thorough treatment on the subject can be found in \cite{Haha70}, and we recall briefly here the main results that are used in the paper.

Let $A$ a $m\times n$ matrix with integer entries. Then there always exist invertible matrices $P$ and $Q$, of size $m$ and $n$ respectively, such that the matrix $D=PAQ$ has the following shape:

\begin{itemize}

\item For a certain $k\leq min(m,n)$ and for every $i\leq k$, the entries $d_{ii}$ of the matrix $D$ are nonzero integers.

\item For every $i\leq k$, $d_{ii}$ divides $d_{i+1,i+1}$.

\item The remaining entries of $D$ are zero.

\end{itemize}

The matrix $D$ is called the \emph{Smith normal form} of $A$ (usually abbreviated SNF) and is unique up to signs of the $d_{ii}$. The non-trivial entries are called the \emph{invariant factors} or the \emph{elementary divisors} of $A$.

We will use the Smith normal form to describe kernels and cokernels of homomorphisms between free abelian groups. Consider then a homomorphism $f:\Z^n\ra\Z^m$, and the associated matrix $A$ of size $m\times n$. Let $SNF(A)=PAQ$ a decomposition of the Smith normal form, and $(d_1, \dots ,d_k)$ the invariant factors. Then we have the following:

\begin{itemize}

\item Assume $|d_j|>1$, $|d_{j-1}|=1$, or $d_j=d_1$ if $|d_1|>1$.  Then the cokernel of $f$ is isomorphic to $\Z/(d_j)\times\ldots\times \Z/(d_k)\times \Z^{m-k}$.

\item The images of the last $m-k$ column vectors of $P^{-1}$ under the projection $\Z^m\ra Coker\textrm{ f}$ produce a basis of the torsion-free part of $Coker\textrm{ f}$.

\item The last $n-k$ column vectors of $Q$ provide a basis for the kernel of $f$.

\end{itemize}

These results will be essential when computing the Bredon homology groups.

\begin{remark}

There are different algorithms to compute the Smith normal form of a matrix. The computations for this paper have been performed used the algorithm implemented in \cite{Mat}. The outcome of our computations, including Smith normal forms and auxiliary matrices, is available on request.

\end{remark}

\section{Bredon homology of wallpaper groups}
\label{Computations}

In this section we undertake the main goal of this paper, which is the explicit computation of the Bredon homology of the wallpaper groups, with respect to the family of finite subgroups. Some notation will be required at this point. In general, for any of the wallpaper groups, a representative of a class of equivariant $i$-cells will be denoted by $e_i^j$. When there is only one equivariant $i$-cell the superscript will be suppressed. The irreducible characters in the representations rings of the stabilizers of $0$-cells will be denoted by $\alpha$, of stabilizers of $1$-cells by the letter $\beta$, and of stabilizers of $2$-cells by the letter $\gamma$. The homology classes in every chain group will be denoted by brackets. If $H<K$ is an inclusion of stabilizers and $\chi$ is a character on $H$, then the induced character on $K$ is denoted by $(\chi\uparrow K)$. In the exposition we will sometimes refer to Figure \ref{Doris} without express mention.

In the computation of the Bredon homology of every wallpaper group we undertake the following strategy. Starting from the pattern of the group of Figure \ref{Doris} and taking into account that the pattern contains a fundamental domain for the group, we describe a $G$-CW-complex structure in the plane with a unique class of equivariant 2-cells and we compute the boundaries of the 2-cells and 1-cells. Then, we describe the stabilizers of the cells and form the corresponding Bredon chain complex. After that, using the previously computed boundaries and the induced representations that are listed in Table \ref{Stabilizers} and taking account of the orientations, we describe the differentials of the Bredon complex. We conclude by using the Smith normal forms of the matrices of the differentials and their auxiliary matrices to describe the homology groups and their bases, in terms of irreducible characters of the stabilizers.

\subsection{The group $\mathbf{p1}$.}

%The \emph{monotrope group} $\mathbf{p1}$ admits the presentation $$\{x,y \textrm{ } | \textrm{ } xyx^{-1}y^{-1} \}.$$

 As this group is generated by the two translations, a representative $e_2$ for the equivariant 2-cells will be given by the polygon in Figure \ref{Doris}. Call $O$ the lowest vertex of the polygon $D$ in its left-hand side. Then the remaining vertices, going clockwise, are $P=g_1O$, $Q=g_2O$ and $R=g_3O$ for $g_1$ the vertical translation, $g_3$ the right translation and $g_2$ the product of them. We then consider just one class of $\mathbf{p1}$-equivariant $0$-cells in the plane, and we choose the representative $e_0$ corresponding to the vertex $O$. In this structure, there are two equivariant $1$-cells, which are represented by the edges $e_1^0=OP$ and $e_1^2=PQ$. Hence, the boundaries with respect to the orbit representatives are given by:

$$\partial e_2=e_1^0+e_1^1+g_3e_1^0+g_1e_1^1,$$
$$\partial e_1^0=g_1e_0-e_0,$$
$$\partial e_1^1=g_2e_0-g_1e_0.$$

As this group is torsion-free, all the stabilizers of the equivariant cells are trivial. Then the Bredon chain complex has the shape:

$$0\rightarrow \Z\gamma \rightarrow \Z\beta_1\oplus\Z\beta_2\rightarrow \Z\alpha\rightarrow 0.$$

Now we have:

$$\Phi_2(\gamma)=(\gamma \uparrow \textrm{stab}(e_1^0))+(\gamma \uparrow \textrm{stab}(e_1^1))-(\gamma \uparrow \textrm{stab}(e_1^0))-(\gamma \uparrow \textrm{stab}(e_1^1))=0,$$
$$\Phi_1(\beta_1)=(\beta_1 \uparrow \textrm{stab}(e_0^0))-(\beta_1 \uparrow \textrm{stab}(e_0^0))=0,$$
$$\Phi_1(\beta_2)=(\beta_2 \uparrow \textrm{stab}(e_0^0))-(\beta_2 \uparrow \textrm{stab}(e_0^0))=0.$$

Hence, the differentials in the Bredon complex are trivial, and we obtain $H_2^{\mathcal{F}}(\mathbf{p1},R_{\mathbb{C}})= \Z[\gamma]$, $H_1^{\mathcal{F}}(\mathbf{p1},R_{\mathbb{C}})=\Z[\beta_1]\oplus\Z[\beta_2]$ and $H_0^{\mathcal{F}}(\mathbf{p1},R_{\mathbb{C}})=\Z[\alpha]$.

Observe that, as $\mathbf{p1}$ is torsion-free, the Bredon homology groups of $\underline{E}{\mathbf{p1}}$ coincide with the ordinary homology groups of the classifying space $B\mathbf{p1}$. Hence, we have just recovered here the classical homology of the torus.

\subsection{The group $\mathbf{p2}$.}

%The \emph{ditrope group} $\mathbf{p2}$ admits the presentation $$\{x,y,z \textrm{ } | \textrm{ } xyx^{-1}y^{-1}, z^2, (zx)^2, (zy)^2 \}.$$

Consider the lowest half of the polygon in Figure \ref{Doris}, which will be a representative for the equivariant 2-cell $e_2$. We consider the five vertices in this lowest half of the picture, and also the center of the polygon. Starting from the lowest left-hand side vertex and counting clockwise, we denote the vertices by $O$, $P$, $Q$, $R$, $S$ and $T$. We consider four classes of $0$-cells, with representatives $e_0^0$, $e_0^1$, $e_0^2$ and $e_0^3$ that correspond respectively to the vertices $O$, $P$, $Q$ and $T$. Observe that $R$ is the image of $P$ under the rotation $r_1$ of center $Q$, and $S$ is the image of $O$ under the rotation $r_2$ of center $T$. In turn, there are three classes of $1$-cells, with representatives $e_1^0$, $e_1^1$ and $e_1^2$ that correspond to the edges $OP$, $PQ$ and $ST$. Hence, if $t$ is the horizontal translation, we have the boundaries:

$$\partial(e_2)=e_1^0+e_1^1+r_1e_1^1+te_1^0+e_1^2+r_2e_1^2,$$
$$\partial(e_1^0)=e_0^1-e_0^0,$$
$$\partial(e_1^1)=e_0^2-e_0^1,$$
$$\partial(e_1^2)=e_0^3-e_0^0.$$

The only nontrivial stabilizers correspond to the 0-cells (which are 2-centers of rotation), and are all isomorphic to $C_2.$  Then the chain complex is:

$$0\ra \Z\gamma \ra \bigoplus_{i=0}^2\Z\beta_i\ra \bigoplus_{i=0}^3 (\Z\alpha_i^1\oplus\Z\alpha_i^2)\ra 0.$$

Taking account of line 1 in Table \ref{Stabilizers}, the differentials are defined in the following way:

$$\Phi_2(\gamma)=(\gamma \uparrow \textrm{stab}(e_1^0))+(\gamma \uparrow \textrm{stab}(e_1^1))-(\gamma \uparrow \textrm{stab}(e_1^1))-(\gamma \uparrow \textrm{stab}(e_1^0))+(\gamma \uparrow \textrm{stab}(e_1^2))-(\gamma \uparrow \textrm{stab}(e_1^2))=0,$$
$$\Phi_1(\beta_0)=(\beta_0 \uparrow \textrm{stab}(e_0^1))-(\beta_1 \uparrow \textrm{stab}(e_0^0))=\alpha^1_1+\alpha^2_1-\alpha^1_0-\alpha^2_0,$$
$$\Phi_1(\beta_1)=(\beta_1 \uparrow \textrm{stab}(e_0^2))-(\beta_2 \uparrow \textrm{stab}(e_0^1))=\alpha^1_2+\alpha^2_2-\alpha^1_1-\alpha^2_1,$$
$$\Phi_1(\beta_2)=(\beta_2 \uparrow \textrm{stab}(e_0^3))-(\beta_2 \uparrow \textrm{stab}(e_0^0))=\alpha^1_3+\alpha^2_3-\alpha^1_0-\alpha^2_0.$$

Now computing the Smith normal form of the matrix of $\Phi_1$ we obtain that the invariant factors of $\Phi_1$ are $(1,1,1)$. As $\Phi_2$ is trivial, this implies that $H_2^{\mathcal{F}}(\mathbf{p2},R_{\mathbb{C}})= \Z$, $H_1^{\mathcal{F}}(\mathbf{p2},R_{\mathbb{C}})=0$ and $H_0^{\mathcal{F}}(\mathbf{p2},R_{\mathbb{C}})=\Z^5$.

It is clear that a basis for $H_2^{\mathcal{F}}(\mathbf{p2},R_{\mathbb{C}})$ is $[\gamma]$. In turn, the matrix $Q$ obtained in the computation of the SNF shows that a basis of $H_0^{\mathcal{F}}(\mathbf{p2},R_{\mathbb{C}})$ is given by $([\alpha_0^1], [\alpha_0^2], [\alpha_1^2], [\alpha_2^2], [\alpha_3^2])$.

\subsection{The group $\mathbf{pm}$.}

%The \emph{monoscopic group} $\mathbf{pm}$ admits the presentation $$\{x,y,z \textrm{ } | \textrm{ } xyx^{-1}y^{-1}, z^2, zxzx^{-1}, (zy)^2 \}.$$

Again in this case a representative $e_2$ for the unique 2-cell will be given by the lowest half of the rectangle. Consider $O$, $P$, $Q$ and $R$ the vertices of this little rectangle, starting from the lowest in the left-hand side edge. There will be two classes of $0$-cells, with representatives $e_0^0$ and $e_0^1$, that correspond respectively to the vertices $O$ and $P$, with $Q$ in the class of $P$ and $R$ in the class of $O$ (the identification given by the horizontal translation $t$). In turn, there are two classes of $1$-cells, with representatives $e_1^0$, $e_1^1$ and $e_1^2$, given by the edges $OP$, $PQ$ and $RP$ respectively. Observe that $RQ$ is the image of $OP$ under $t$. Now we can compute the boundaries:

$$\partial(e_2)=e_1^0+e_1^1+te_1^0+e_1^2,$$
$$\partial(e_1^0)=e_0^1-e_0^0,$$
$$\partial(e_1^1)=te_0^1-e_0^1,$$
$$\partial(e_1^2)=e_0^0-te_0^0.$$

The vertices and the edges $e_1^1$ and $e_1^2$ are in rotation axes, so their stabilizers are isomorphic to $C_2$, while the stabilizer of the remaining edge is trivial. The the Bredon chain complex takes the following shape:

$$0\ra \Z\gamma \ra \Z\beta_0\oplus\Z\beta^1_1\oplus\Z\beta^2_1\oplus\Z\beta^1_2\oplus\Z\beta^2_2\  \ra \bigoplus_{i=0}^1 (\Z\alpha_i^1\oplus\Z\alpha_i^2)\ra 0.$$

Now we can compute the differentials, taking account of line 1 in Table \ref{Stabilizers}:

$$\Phi_2(\gamma)=(\gamma \uparrow \textrm{stab}(e_1^0))+(\gamma \uparrow \textrm{stab}(e_1^1))-(\gamma \uparrow \textrm{stab}(e_1^0))+(\gamma \uparrow \textrm{stab}(e_1^2))=\beta_1^1+\beta_1^2+\beta_2^1+\beta_2^2,$$
$$\Phi_1(\beta_0)=(\beta_0 \uparrow \textrm{stab}(e_0^1))-(\beta_1 \uparrow \textrm{stab}(e_0^0))=\alpha_1^1+\alpha_1^2-\alpha_0^1-\alpha_0^2,$$
$$\Phi_1(\beta^1_1)=(\beta^1_1 \uparrow \textrm{stab}(e_0^1))-(\beta^1_1 \uparrow \textrm{stab}(e_0^1))=0,$$
$$\Phi_1(\beta^2_1)=(\beta^2_1 \uparrow \textrm{stab}(e_0^1))-(\beta^2_1 \uparrow \textrm{stab}(e_0^1))=0,$$
$$\Phi_1(\beta^1_2)=(\beta^1_2 \uparrow \textrm{stab}(e_0^0))-(\beta^1_1 \uparrow \textrm{stab}(e_0^0))=0,$$
$$\Phi_1(\beta^2_2)=(\beta^2_2 \uparrow \textrm{stab}(e_0^0))-(\beta^2_1 \uparrow \textrm{stab}(e_0^0))=0.$$

 We compute the SNF of the matrices of $\Phi_2$ and $\Phi_1$ and we respectively obtain that the invariant factors are $(1)$ and $(1)$. This implies that $H_2^{\mathcal{F}}(\mathbf{pm},R_{\mathbb{C}})= 0$, $H_1^{\mathcal{F}}(\mathbf{pm},R_{\mathbb{C}})=\Z^3$ and $H_0^{\mathcal{F}}(\mathbf{pm},R_{\mathbb{C}})=\Z^3$.

Now, the matrix $Q$ obtained in the computation of the SNF for $\Phi_1$ and the definition of $\Phi_2$ show that a basis of $H_1^{\mathcal{F}}(\mathbf{pm},R_{\mathbb{C}})$ is given by $([\beta_1^1], [\beta_1^2], [\beta_2^1])$, while a basis for $H_0^{\mathcal{F}}(\mathbf{pm},R_{\mathbb{C}})$ is given by $([\alpha_0^2], [\alpha_1^1], [\alpha_1^2]).$

\subsection{The group $\mathbf{pg}$.}

%The \emph{monoglide group} $\mathbf{pg}$ admits the presentation $$\{x,y,z \textrm{ } | \textrm{ } xyx^{-1}y^{-1}, z^2x^{-1}, zyz^{-1}y \}. OJO$$

Here we divide the (big) rectangle in Figure \ref{Doris} in two equal rectangles by a vertical line; then the left one will be a representative $e_2$ of the unique class of $2$-cells. Consider the vertices $O,P,Q,R$ of this rectangle, counting clockwise from the left-hand lowest vertex $O$. Observe that $P$ is the image of $O$ under vertical translation, $Q$ under glide-reflection, and $R$ under the composition of both. Hence, there will also be a unique class of $0$-cells, and we denote by $e_0$ the representative given by $O$. In turn, there are two classes of 1-cells, identified by $OP$ and $PQ$, which we respectively denote $e_1^0$ and $e_1^1$. Hence, if we call $t$ the vertical translation (going upwards) and $g$ the glide reflection, the boundaries of the representatives are defined in the following way:

$$\partial(e_2)=e_1^0+e_1^1+ge_1^0+t^{-1}e_1^1,$$
$$\partial(e_1^0)=te_0^0-e_0^0,$$
$$\partial(e_1^1)=ge_0^0-te_0^0.$$

The group $\mathbf{pg}$ is torsion-free, and hence all its stabilizers are trivial. This time the chain complex is quite simple:

$$0\ra \Z\gamma \ra \Z\beta_0\oplus\Z\beta_1\ra \Z\alpha.$$

Now we compute the differentials of the complex:

$$\Phi_2(\gamma)=(\gamma \uparrow \textrm{stab}(e_1^0))+(\gamma \uparrow \textrm{stab}(e_1^1))+(\gamma \uparrow \textrm{stab}(e_1^0))-(\gamma \uparrow \textrm{stab}(e_1^1))=2\beta_0,$$
$$\Phi_1(\beta_0)=(\beta_0 \uparrow \textrm{stab}(e_0^0))-(\beta_0 \uparrow \textrm{stab}(e_0^0))=0,$$
$$\Phi_1(\beta^1)=(\beta_1 \uparrow \textrm{stab}(e_0^0))-(\beta^1 \uparrow \textrm{stab}(e_0^0))=0.$$

As $\Phi_1$ is trivial, $H_0^{\mathcal{F}}(\mathbf{pg},R_{\mathbb{C}})=\Z$. On the other hand, the invariant factor of the SNF of the matrix of $\Phi_2$ is $(2)$, so $H_1^{\mathcal{F}}(\mathbf{pg},R_{\mathbb{C}})=\Z\oplus\Z /2$ and $H_2^{\mathcal{F}}(\mathbf{pg},R_{\mathbb{C}})=0$.

By construction it is easy to see here that a basis for $H_0^{\mathcal{F}}(\mathbf{pg},R_{\mathbb{C}})$ is given by $[\alpha]$, while $[\beta_0]$ and $[\beta_1]$ generate respectively the torsion part and the free part of $H_1^{\mathcal{F}}(\mathbf{pg},R_{\mathbb{C}})$.

As the group is torsion-free, we recover again (as in the case of $\mathbf{p1}$) the ordinary homology of the classifying space $B\mathbf{pg}$, which in this case has the homotopy type of the Klein bottle.

\subsection{The group $\mathbf{cm}$.}

%The \emph{monorrhombic group} $\mathbf{cm}$ admits the presentation $$\{x,y,z \textrm{ } | \textrm{ } xyx^{-1}y^{-1}, z^2, (zy)^2, zxzy^{-1}x^{-1} \}.$$

Here our representative $e_2$ of the class of equivariant $2$-cells will be given by the lowest half of the rhombus. We denote its vertices by $O$, $P$ and $Q$, starting from the one in the left and counting anti-clockwise. The horizontal translation $t$ takes $O$ to $Q$, while a glide-reflection sends $O$ to $P$ and $P$ to $Q$. Then, again we consider a unique class of equivalence of $0$-cells, whose representative $e_0$ is identified with $O$. There are also two classes of $1$-cells, whose representatives $e_1^0$ and $e_1^1$ we identify with $OP$ and $QR$ (observe that $PQ$ is the image of $OP$ under the glide-reflection $g$). Now the boundaries are given by:

$$\partial(e_2)=e_1^0+ge_1^0+e_1^1,$$
$$\partial(e_1^0)=ge_0^0-e_0^0,$$
$$\partial(e_1^1)=e_0^0-g^2e_0^0.$$

Observe that both $e_0^0$ and $e_1^1$ lie in a reflection axis, so their stabilizers are isomorphic to $C_2$. On the other hand, the stabilizer of the other edge is trivial, and hence we have the following Bredon chain complex:

$$0\ra \Z\gamma \ra \Z\beta_0\oplus \Z\beta_1^1\oplus \Z\beta_1^2\ra \Z\alpha^1\oplus \Z\alpha^2\ra 0.$$

The differentials are quite simple in this case, taking again account of line 1 in Table \ref{Stabilizers}:

$$\Phi_2(\gamma)=(\gamma \uparrow \textrm{stab}(e_1^0))+(\gamma \uparrow \textrm{stab}(e_1^0))+(\gamma \uparrow \textrm{stab}(e_1^1))=2\beta_0+\beta_1^1+\beta_1^2,$$
$$\Phi_1(\beta_0)=(\beta_0 \uparrow \textrm{stab}(e_0^0))-(\beta_0 \uparrow \textrm{stab}(e_0^0))=0,$$
$$\Phi_1(\beta^1)=(\beta_1 \uparrow \textrm{stab}(e_0^0))-(\beta^1 \uparrow \textrm{stab}(e_0^0))=0.$$

Again the triviality of $\Phi_1$ immediately implies $H_0^{\mathcal{F}}(\mathbf{cm},R_{\mathbb{C}})=\Z^2$. On the other hand, the unique invariant factor of the SNF of the matrix of $\Phi_2$ is $(1)$, so $H_1^{\mathcal{F}}(\mathbf{cm},R_{\mathbb{C}})=\Z^2$ and $H_2^{\mathcal{F}}(\mathbf{cm},R_{\mathbb{C}})=0$.

It is clear that a basis of $H_0^{\mathcal{F}}(\mathbf{cm},R_{\mathbb{C}})$ is given by $[\alpha^1]$ and $[\alpha^2]$, while the definition of $\Phi_2$ implies that $[\beta_1^1]$ and $[\beta_1^2]$ form a basis for $H_1^{\mathcal{F}}(\mathbf{cm},R_{\mathbb{C}})$.

\subsection{The group $\mathbf{pmm}$.}

%The \emph{discope group} $\mathbf{pmm}$ admits the presentation $$\{x,y,z,t \textrm{ } | \textrm{ } xyx^{-1}y^{-1}, z^2, t^2, ztz^{-1}t^{-1}, zxzx^{-1}, %(tx)^2, (zy)^2, tyty^{-1} \}.$$

Here the lowest left small square can be taken a representative $e_2$ of the equivariant class of 2-cells under the action of $\mathbf{pmm}$. We consider the four vertices $O$, $P$, $Q$ and $R$ of this small square, starting as always in the lowest one of the left-hand side, and counting clockwise. Each of this edges will correspond respectively to a representatives of different classes of 0-cells, say $e_0^0,$ $e_0^1$, $e_0^2$ and $e_0^3$. In turn, there will be also be four representatives of classes of $1$-cells, namely $e_1^0,$ $e_1^1$, $e_1^2$ and $e_1^3$, which we respectively identify with the edges $OP$, $PQ$, $QR$ and $RP.$ The boundaries in this case are easy, because the group makes no identifications inside the small square:

$$\partial(e_2)=e_1^0+e_1^1+e_1^2+e_1^3,$$
$$\partial(e_1^0)=e_0^1-e_0^0,$$
$$\partial(e_1^1)=e_0^2-e_0^1,$$
$$\partial(e_1^2)=e_0^3-e_0^2,$$
$$\partial(e_1^3)=e_0^0-e_0^3.$$

In this model all the edges lie on reflection axes, and there no other relevant isometries. Hence, all the stabilizers of the edges are isomorphic to $C_2$. In turn, every vertex lie in two different reflection axes, so the stabilizers of the vertices are isomorphic to $D_2$. The Bredon complex takes then the following shape:

$$0\ra \Z\gamma \ra \bigoplus_{i=0}^3 (\beta_i^1\oplus \beta_i^2) \ra \bigoplus_{i=0}^3 (\alpha_i^1\oplus \alpha_i^2\oplus \alpha_i^3\oplus \alpha_i^4)\ra 0.$$

Taking account of lines 1, 6 and 7 in Table \ref{Stabilizers}, we compute the differentials:

$$\Phi_2(\gamma)=(\gamma \uparrow \textrm{stab}(e_1^0))+(\gamma \uparrow \textrm{stab}(e_1^1))+(\gamma \uparrow \textrm{stab}(e_1^2)+(\gamma \uparrow \textrm{stab}(e_1^3)=\sum_{i=0}^3\sum_{j=1}^2 \beta_i^j,$$
$$\Phi_1(\beta_0^1)=(\beta_0^1 \uparrow \textrm{stab}(e_0^1))-(\beta_0 \uparrow \textrm{stab}(e_0^0))=\alpha^1_1+\alpha^2_1-\alpha^1_0-\alpha^2_0,$$
$$\Phi_1(\beta_0^2)=(\beta_0^2 \uparrow \textrm{stab}(e_0^1))-(\beta_0 \uparrow \textrm{stab}(e_0^0))=\alpha^3_1+\alpha^4_1-\alpha^3_0-\alpha^4_0,$$
$$\Phi_1(\beta_1^1)=(\beta_1^1 \uparrow \textrm{stab}(e_0^2))-(\beta_0 \uparrow \textrm{stab}(e_0^1))=\alpha^1_2+\alpha^2_2-\alpha^1_1-\alpha^3_1,$$
$$\Phi_1(\beta_1^2)=(\beta_1^2 \uparrow \textrm{stab}(e_0^2))-(\beta_0 \uparrow \textrm{stab}(e_0^1))=\alpha^3_2+\alpha^4_2-\alpha^2_1-\alpha^4_1,$$
$$\Phi_1(\beta_2^1)=(\beta_2^1 \uparrow \textrm{stab}(e_0^3))-(\beta_0 \uparrow \textrm{stab}(e_0^2))=\alpha^1_3+\alpha^2_3-\alpha^1_2-\alpha^3_2,$$
$$\Phi_1(\beta_2^2)=(\beta_2^2 \uparrow \textrm{stab}(e_0^3))-(\beta_0 \uparrow \textrm{stab}(e_0^2))=\alpha^3_3+\alpha^4_3-\alpha^2_2-\alpha^4_2,$$
$$\Phi_1(\beta_3^1)=(\beta_3^1 \uparrow \textrm{stab}(e_0^0))-(\beta_0 \uparrow \textrm{stab}(e_0^3))=\alpha^1_0+\alpha^3_0-\alpha^1_3-\alpha^3_3,$$
$$\Phi_1(\beta_3^2)=(\beta_3^2 \uparrow \textrm{stab}(e_0^0))-(\beta_0 \uparrow \textrm{stab}(e_0^3))=\alpha^2_0+\alpha^4_0-\alpha^2_3-\alpha^4_3.$$

Observe that when we described the differentials, we have taken into account that two coincident reflection edges define different subgroups (isomorphic to $C_2$) in the stabilizer of the common vertex.

We now compute the SNF of the matrices of $\Phi_2$ and $\Phi_1$ and we obtain that the invariant factors are $(1)$ and $(1,1,1,1,1,1,1)$, respectively. This implies that $H_2^{\mathcal{F}}(\mathbf{pmm},R_{\mathbb{C}})= 0$, $H_1^{\mathcal{F}}(\mathbf{pmm},R_{\mathbb{C}})=0$ and $H_0^{\mathcal{F}}(\mathbf{pmm},R_{\mathbb{C}})=\Z^9$.

Finally, the last columns of the matrix $Q$ obtained in the computation of the SNF for $\Phi_1$  show that a basis for $H_0^{\mathcal{F}}(\mathbf{pm},R_{\mathbb{C}})$ is given by $([\alpha_0^3], [\alpha_1^3], [\alpha_1^4], [\alpha_2^3], [\alpha_2^4], [\alpha_3^1], [\alpha_3^2], [\alpha_3^3], [\alpha_3^4])$.

\subsection{The group $\mathbf{pmg}$.}

%The \emph{digyro group} $\mathbf{pmg}$ admits the presentation $$\{x,y,z,t \textrm{ } | \textrm{ } xyx^{-1}y^{-1}, t^2, z^2y^{-1}, zxz^{-1}x, txtx^{-1}, %(tz)^2, (ty)^2 \}.$$

A representative $e_2$ for the class of equivariant 2-cells in $\mathbf{pmg}$ will given by any of the two rectangles of the picture in Figure \ref{Doris} whose horizontal edges are reflection axes, so we choose for example the left one. We consider six vertices on it: the four vertices given by the corners of the rectangle, and the marked rotation centers in the middle points of the vertical sides. Starting from the lowest vertex of the left-hand side of the rectangle and going clockwise, we call these vertices $O$, $P$, $Q$, $R$, $S$ and $T$. Then, representatives of the four classes $e_0^0$, $e_0^1$, $e_0^2$ and $e_0^3$ of equivariant 0-cells will be respectively given by the vertices $O$, $P$, $R$ and $S$. Remark that $Q$ is the image of $O$ under the rotation $r_1$ of center $P$, and $T$ is the image of $R$ under the rotation $r_2$ of center $S$. There will also four classes of equivariant 1-cells, whose representatives $e_1^0$, $e_1^1$, $e_1^2$ and $e_1^3$ are identified with $OP$, $QR$, $RS$ and $TP$. Observe that $r_1(OP)=QP$ and $r_2(RS)=TS$. We are now ready to compute the boundaries:

$$\partial(e_2)=e_1^0+r_1e_1^0+e_1^1+e_1^2+r_2e_2+e_1^3,$$
$$\partial(e_1^0)=e_0^1-e_0^0,$$
$$\partial(e_1^1)=e_0^2-r_1e_0^0,$$
$$\partial(e_1^2)=e_0^3-e_0^2,$$
$$\partial(e_1^3)=e_0^0-r_2e_0^2.$$

Now, the horizontal edges of the rectangle are in reflection axes, and same happens to the vertices $e_0^0$ and $e_0^2$, so the stabilizers of the corresponding cells are isomorphic to $C_2$. As $e_0^1$ and $e_0^3$ are centers of 2-rotation the stabilizers are also isomorphic to $C_2$. Finally, the group acts freely over the classes of $e_1^0$ and $e_1^0$, so we can form the Bredon chain complex:

$$0\ra \Z\gamma \ra \Z\beta_0\oplus\Z\beta_1^1\oplus\Z\beta_1^2\oplus\Z\beta_2^1\oplus\Z\beta_2^2\oplus\Z\beta_3\ra \bigoplus_{i=1}^2\Z\alpha^i_0\oplus\bigoplus_{i=1}^2\Z\alpha^i_1\bigoplus_{i=1}^2\Z\alpha^i_2\bigoplus_{i=1}^2\Z\alpha^i_3\ra 0.$$

Let us compute the differentials of the complex, taking account of line 1 in Table \ref{Stabilizers}:

$$\Phi_2(\gamma)=(\gamma \uparrow \textrm{stab}(e_1^0))-(\gamma \uparrow \textrm{stab}(e_1^0))+(\gamma \uparrow \textrm{stab}(e_1^1)+(\gamma \uparrow \textrm{stab}(e_1^2)-(\gamma \uparrow \textrm{stab}(e_1^2))+(\gamma \uparrow \textrm{stab}(e_1^3))=$$ $$=\beta_1^1+\beta_1^2+\beta_2^1+\beta_2^2,$$
$$\Phi_1(\beta_0)=(\beta_0 \uparrow \textrm{stab}(e_0^1))-(\beta_0 \uparrow \textrm{stab}(e_0^0))=\alpha^1_1+\alpha^2_1-\alpha^1_0-\alpha^2_0,$$
$$\Phi_1(\beta_1^1)=(\beta_0^2 \uparrow \textrm{stab}(e_0^2))-(\beta_1^1 \uparrow \textrm{stab}(e_0^0))=\alpha^1_2+\alpha^1_0,$$
$$\Phi_1(\beta_1^2)=(\beta_1^2 \uparrow \textrm{stab}(e_0^2))-(\beta_1^2 \uparrow \textrm{stab}(e_0^0))=\alpha^2_2+\alpha^2_0,$$
$$\Phi_1(\beta_2^1)=(\beta_2^1 \uparrow \textrm{stab}(e_0^3))-(\beta_2^1 \uparrow \textrm{stab}(e_0^2))=\alpha^1_3+\alpha^2_3-\alpha^1_2-\alpha^2_2,$$
$$\Phi_1(\beta_2^2)=(\beta_2^2 \uparrow \textrm{stab}(e_0^3))-(\beta_2^2 \uparrow \textrm{stab}(e_0^2))=\alpha^1_0+\alpha^1_2,$$
$$\Phi_1(\beta_3)=(\beta_3 \uparrow \textrm{stab}(e_0^0))-(\beta_3 \uparrow \textrm{stab}(e_0^2))=\alpha^2_0+\alpha^2_2.$$

 From the SNF of the matrices of $\Phi_2$ and $\Phi_1$ it is obtained that the invariant factors are $(1)$ and $(1,1,1,1)$, respectively. This implies that $H_2^{\mathcal{F}}(\mathbf{pmg},R_{\mathbb{C}})= 0$, $H_1^{\mathcal{F}}(\mathbf{pmg},R_{\mathbb{C}})=\Z$ and $H_0^{\mathcal{F}}(\mathbf{pmg},R_{\mathbb{C}})=\Z^4$.

In turn, the matrices $P$ and $Q$ obtained in the computation of the Smith normal forms show that a basis for $H_0^{\mathcal{F}}(\mathbf{pmg},R_{\mathbb{C}})$ is given $([\alpha_1^2], [\alpha_2^1], [\alpha_2^2], [\alpha_3^2])$ and a basis for $H_1^{\mathcal{F}}(\mathbf{pmg},R_{\mathbb{C}})$ is given by $[\beta_1^1+\beta_2^1]$.

\subsection{The group $\mathbf{pgg}$.}

%The \emph{diglide group} $\mathbf{pgg}$ admits the presentation $$\{x,y,z,t \textrm{ } | \textrm{ } xyx^{-1}y^{-1},t^2y^{-1}, z^2x^{-1}, txt^{-1}x, (zt)^2, %zyz^{-1}y \}. OJO$$

For this group, a representative $e_2$ for the class of equivariant 2-cells will be given for example by the triangle determined by the middle points of the vertical sides and the center of the lowest horizontal side of the (big) rectangle of the picture in Figure \ref{Doris}. Consider these three vertices and the center of the rectangle, and call them $O$, $P$, $Q$ and $R$, starting from the middle point of the left vertical side and counting clockwise. Then representatives $e_0^0$ and $e_0^1$ for the classes of $0$-cells will be given by $O$ and $P$, being $Q$ the image of $O$ under a rotation $r$ of center $P$ and $R$ the image of $O$ under a glide-reflection $g$. Representatives $e_1^0$ and $e_1^1$ for the classes of 1-cells are given by $RO$ and $OP$ respectively, being $QP=r(OP)$ and $RQ=g(OR)$. Now the boundaries are defined in the following way:

$$\partial(e_2)=e_1^1+re_1^1+ge_1^0+e_1^0,$$
$$\partial(e_1^0)=ge_0^0-e_0^0,$$
$$\partial(e_1^1)=e_0^1-e_0^0.$$

The two representatives of $0$-cells are centers of 2-rotation, and then their stabilizers are isomorphic to $C_2$. On the other hand, the group acts freely over the classes of $e_1^0$ and $e_1^1$, so we obtain the following Bredon complex:

$$0\ra \Z\gamma \ra \Z\beta_0\oplus\Z\beta_1\ra \Z\alpha_0^1\oplus\Z\alpha_0^2\oplus\Z\alpha_1^1\oplus\Z\alpha_1^2.$$

The differentials of the chain complex are now given in the following way, taking account of line 1 in Table \ref{Stabilizers}:

$$\Phi_2(\gamma)=(\gamma \uparrow \textrm{stab}(e_1^1))-(\gamma \uparrow \textrm{stab}(e_1^1))+(\gamma \uparrow \textrm{stab}(e_1^0))+(\gamma \uparrow \textrm{stab}(e_1^0))=2\beta_1,$$
$$\Phi_1(\beta_0)=(\beta_0 \uparrow \textrm{stab}(e_0^1))-(\beta_0 \uparrow \textrm{stab}(e_0^0))=\alpha^1_1+\alpha^2_1-\alpha^1_0-\alpha^2_0,$$
$$\Phi_1(\beta_1)=(\beta_1 \uparrow \textrm{stab}(e_0^0))-(\beta_1 \uparrow \textrm{stab}(e_0^0))=0.$$

 Now the Smith normal form of the matrices of $\Phi_2$ and $\Phi_1$ gives respectively the invariant factors $(2)$ and $(1)$. Hence, we have $H_2^{\mathcal{F}}(\mathbf{pgg},R_{\mathbb{C}})= 0$, $H_1^{\mathcal{F}}(\mathbf{pgg},R_{\mathbb{C}})=\Z /2$ and $H_0^{\mathcal{F}}(\mathbf{pgg},R_{\mathbb{C}})=\Z^3$.

 Also, the matrices $P$ and $Q$ obtained in the computation of the Smith normal forms show that a basis for $H_0^{\mathcal{F}}(\mathbf{pgg},R_{\mathbb{C}})$ is given by $([\alpha_1^2], [\alpha_2^1], [\alpha_2^2])$ and a generator for $H_1^{\mathcal{F}}(\mathbf{pgg},R_{\mathbb{C}})$ is given by $[\beta_0]$.

\subsection{The group $\mathbf{cmm}$.}

%The \emph{dirrhombic group} $\mathbf{cmm}$ admits the presentation $$\{x,y,z,t \textrm{ } | \textrm{ } xyx^{-1}y^{-1}, z^2, t^2, (zy)^2, zxzy^{-1}x^{-1}, %(zt)^2, (tx)^2, (ty)^2) \}.$$

Consider the four triangles inside the rhombus in the figure. A representative $e_2$ for the equivalence class of 2-cells will be given by the lowest left-hand side triangle. Starting from the left and going clockwise, denote by $O$, $P$ and $Q$ the vertices of this triangle, and by $R$ the middle point of the diagonal side. Then there will be three representatives $e_0^0$, $e_0^1$ and $e_0^2$ of the classes of 0-cells, corresponding respectively to the vertices $O$ and $P$ and $R$; observe that $Q$ is the image of $P$ under the 2-rotation $r$ whose center is $R$. In turn, there are three classes of 1-cells, with representatives $e_1^0$, $e_1^1$ and $e_1^2$ identified respectively with $OP$, $PQ$ and $QR$. For the remaining edge we have $r(QR)=RP$. Let us describe now the boundaries:

$$\partial(e_2)=e_1^0+e_1^1+e_1^2+re_1^2,$$
$$\partial(e_1^0)=e_0^1-e_0^0,$$
$$\partial(e_1^1)=re_0^0-e_0^0,$$
$$\partial(e_1^2)=e_0^2-re_0^0.$$

Now, in both $e_0^0$ and $e_0^1$ two reflection axes cross, an hence the stabilizers of these two cells are isomorphic to $D_2$. As $e_0^2$ is a center of 2-rotation, its stabilizer is $C_2$. Concerning the 1-cells, $e_1^0$ and $e_1^1$ are in reflection axes, so their stabilizer is $C_2$, while the group act freely over the class of $e_1^2$. Hence we have the chain complex:

$$0\ra \Z\beta_0^1\oplus\Z\beta_0^2\oplus\Z\beta_1^1\oplus\Z\beta_1^2\oplus\Z\beta_2\ra \bigoplus_{i=1}^4\Z\alpha_0^i\bigoplus_{i=1}^4\Z\alpha_1^i\bigoplus_{i=1}^2\Z\alpha_2^i\ra 0.$$

Taking account of lines 1, 2, 6 and 7 in Table \ref{Stabilizers}, the differentials in this case are given by:

$$\Phi_2(\gamma)=(\gamma \uparrow \textrm{stab}(e_1^0))+(\gamma \uparrow \textrm{stab}(e_1^1))+(\gamma \uparrow \textrm{stab}(e_1^2))-(\gamma \uparrow \textrm{stab}(e_1^2))=\beta_0^1+\beta_0^2+\beta_1^1+\beta_1^2,$$
$$\Phi_1(\beta_0^1)=(\beta_0^1 \uparrow \textrm{stab}(e_0^1))-(\beta_0^1 \uparrow \textrm{stab}(e_0^0))=\alpha^1_1+\alpha^2_1-\alpha^1_0-\alpha^2_0,$$
$$\Phi_1(\beta_0^2)=(\beta_0^2 \uparrow \textrm{stab}(e_0^1))-(\beta_0^2 \uparrow \textrm{stab}(e_0^0))=\alpha^3_1+\alpha^4_1-\alpha^3_0-\alpha^4_0,$$
$$\Phi_1(\beta_1^1)=(\beta_1^1 \uparrow \textrm{stab}(e_0^0))-(\beta_1^1 \uparrow \textrm{stab}(e_0^1))=\alpha^1_0+\alpha^3_0-\alpha^1_1-\alpha^3_1,$$
$$\Phi_1(\beta_1^2)=(\beta_1^2 \uparrow \textrm{stab}(e_0^0))-(\beta_1^2 \uparrow \textrm{stab}(e_0^1))=\alpha^2_0+\alpha^4_0-\alpha^2_1-\alpha^4_1.$$
$$\Phi_1(\beta_2)=(\beta_2 \uparrow \textrm{stab}(e_0^2))-(\beta_2 \uparrow \textrm{stab}(e_0^0))=\alpha^1_2+\alpha^2_2-\alpha^1_0-\alpha^2_0-\alpha^3_0-\alpha^4_0.$$

The SMF of the matrices of $\Phi_2$ and $\Phi_1$ give the invariant factors $(2)$ and $(1,1,1,1)$, respectively. This implies $H_2^{\mathcal{F}}(\mathbf{cmm},R_{\mathbb{C}})= 0$, $H_1^{\mathcal{F}}(\mathbf{cmm},R_{\mathbb{C}})=0$ and $H_0^{\mathcal{F}}(\mathbf{cmm},R_{\mathbb{C}})=\Z^6$.

Using again the matrix $Q$ associated to the SMF, we obtain that a basis of $H_0^{\mathcal{F}}(\mathbf{cmm},R_{\mathbb{C}})$ is given by ($[\alpha_0^1+\alpha_0^2], [\alpha_0^3], [\alpha_0^4], [\alpha_1^1], [\alpha_1^3], [\alpha_2^2]$).

\subsection{The group $\mathbf{p4}$.}

%The \emph{tetratrope group} $\mathbf{p4}$ admits the presentation $$\{x,y,z \textrm{ } | \textrm{ } xyx^{-1}y^{-1}, z^4, zyz^{-1}x, zxz^{-1},y^{-1} \}.$$

We divide the square in the picture in four equal squares, using the vertical segment defined by the middle points of the horizontal sides and the horizontal segment defined by the middle points of the vertical sides. The representative $e_2$ for the class of 2-equivariant cells will be the lowest little square in the left-hand side. Denote by $O$, $P$, $Q$ and $R$ the vertices of this little square, starting on the down-left and going clockwise. There are three classes of 0-cells, whose representatives $e_0^0$, $e_0^1$ and $e_0^2$ correspond respectively to $O$, $P$ and $Q$; observe that if $t$ is the 4-rotation (counterclockwise) whose rotation center is $Q$, $t(P)=R$. Moreover, there are two classes of 1-cells, with representatives $e_1^0$ and $e_1^1$ corresponding to the sides $OP$ and $PQ$; the two remaining sides can be obtained as $RQ=t(PQ)$ and $OR=t(OP)$. We have the boundaries:

$$\partial(e_2)=e_1^0+e_1^1+te_1^1+te_1^0,$$
$$\partial(e_1^0)=e_0^1-e_0^0,$$
$$\partial(e_1^1)=e_0^2-e_0^1.$$

Concerning the stabilizers, $e_0^0$ and $e_0^2$ are centers of 4-rotation, and hence their stabilizers are isomorphic to $C_4$; while $e_0^1$ is a center of 2-rotation, and its stabilizer is $C_2$. The group $\mathbf{p4}$ acts freely on each class of 1-cells, so the Bredon chain complex has the following shape:

$$0\ra \Z\gamma\ra \Z\beta_0\oplus\Z\beta_1 \ra \bigoplus_{i=1}^4\Z\alpha_0^i\bigoplus_{i=1}^2\Z\alpha_1^i\bigoplus_{i=1}^4\Z\alpha_2^i\ra 0.$$

Now we can describe the differentials of the complex, taking account of line 1 in Table \ref{Stabilizers}:

$$\Phi_2(\gamma)=(\gamma \uparrow \textrm{stab}(e_1^0))+(\gamma \uparrow \textrm{stab}(e_1^1))-(\gamma \uparrow \textrm{stab}(e_1^1))-(\gamma \uparrow \textrm{stab}(e_1^0))=0,$$
$$\Phi_1(\beta_0)=(\beta_0 \uparrow \textrm{stab}(e_0^1))-(\beta_0 \uparrow \textrm{stab}(e_0^0))=\alpha^1_1+\alpha^2_1-\alpha^1_0-\alpha^2_0-\alpha^3_0-\alpha^4_0,$$
$$\Phi_1(\beta_1)=(\beta_1 \uparrow \textrm{stab}(e_0^2))-(\beta_1 \uparrow \textrm{stab}(e_0^1))=\alpha^1_2+\alpha^2_2+\alpha^3_2+\alpha^4_2-\alpha^1_1-\alpha^2_1.$$

As $\Phi_2=0$, it is deduced immediately that $H_2^{\mathcal{F}}(\mathbf{p4},R_{\mathbb{C}})= \Z$. In turn, the SMF of the matrix of $\Phi_1$ gives $(1,1)$ as invariant factors, so $H_1^{\mathcal{F}}(\mathbf{p4},R_{\mathbb{C}})= 0$ and $H_0^{\mathcal{F}}(\mathbf{cmm},R_{\mathbb{C}})= \Z^8$.

It is clear that $[\gamma]$ is a basis of $H_2^{\mathcal{F}}(\mathbf{p4},R_{\mathbb{C}})$, while the shape of the matrix $Q$ associated to the SNF of the matrix of $\Phi_1$ implies that a basis for $H_0^{\mathcal{F}}(\mathbf{p4},R_{\mathbb{C}})$ is given by $([\alpha_0^1],[\alpha_0^2],[\alpha_0^3],[\alpha_0^4],[\alpha_1^2],[\alpha_2^2],[\alpha_2^3],[\alpha_2^4])$.

\subsection{The group $\mathbf{p4m}$.}

%The \emph{tetrascope group} $\mathbf{p4m}$ admits the presentation $$\{x,y,z,t \textrm{ } | \textrm{ } xyx^{-1}y^{-1}, z^4, t^2, zyz^{-1}x, zxz^{-1}y^{-1}, %txty^{-1}, (tz)^2 \}.$$

All the eight triangles in the picture in Figure \ref{Doris} whose edges are reflection will be elements of the class of equivariant 2-cells. We choose as a representative $e_2$ the only one whose lowest side is the left half of the the lowest horizontal side of the square. Starting from the left vertex of this half and counting clockwise, we denote by $O$, $P$ and $Q$ the vertices of the triangle. Then the representatives $e_0^0$, $e_0^1$ and $e_0^2$ of the classes of 0-cells will be respectively identified with this three points. There are also three classes for $1$-cells, so we make the segments $OP$, $PQ$ and $QO$ correspond to the representatives $e_1^0$, $e_1^1$ and $e_1^2$. We compute the boundaries for these cells:

$$\partial(e_2)=e_1^0+e_1^1+e_1^2,$$
$$\partial(e_1^0)=e_0^1-e_0^0,$$
$$\partial(e_1^1)=e_0^2-e_0^1,$$
$$\partial(e_1^2)=e_0^0-e_0^2.$$

The stabilizers of $e_0^0$ and $e_0^1$ are generated by a 4-rotation and an independent reflection, so they are both isomorphic to $D_4$. In turn, the stabilizer of $e_0^2$ is generated by two reflections, and hence it is isomorphic to $D_2$. As all the 1-cells lie in reflection axes, the corresponding stabilizers are isomorphic to $C_2$. Hence, we obtain the following Bredon complex:

$$0\ra \bigoplus_{i=1}^2\beta_0^i\bigoplus_{i=1}^2\beta_1^i\bigoplus_{i=1}^2\beta_2^i\ra \bigoplus_{i=1}^5\alpha_0^i\bigoplus_{i=1}^5\alpha_1^i\bigoplus_{i=1}^4\alpha_2^i.$$

Taking account of lines 6, 7, 10, 11, 12 and 13 in Table \ref{Stabilizers}, the differentials  of the chain complex are given by:

$$\Phi_2(\gamma)=(\gamma \uparrow \textrm{stab}(e_1^0))+(\gamma \uparrow \textrm{stab}(e_1^1))+(\gamma \uparrow \textrm{stab}(e_1^2))=\beta_0^1+\beta_0^2+\beta_1^1+\beta_1^2+\beta_2^1+\beta_2^2,$$
$$\Phi_1(\beta_0^1)=(\beta_0^1 \uparrow \textrm{stab}(e_0^1))-(\beta_0^1 \uparrow \textrm{stab}(e_0^0))=\alpha^1_1+\alpha^3_1+\alpha^5_1-\alpha^1_0-\alpha^3_0-\alpha^5_0,$$
$$\Phi_1(\beta_0^2)=(\beta_0^2 \uparrow \textrm{stab}(e_0^1))-(\beta_0^2 \uparrow \textrm{stab}(e_0^0))=\alpha^2_1+\alpha^4_1+\alpha^5_1-\alpha^2_0-\alpha^4_0-\alpha^5_0,$$
$$\Phi_1(\beta_1^1)=(\beta_1^1 \uparrow \textrm{stab}(e_0^2))-(\beta_1^1 \uparrow \textrm{stab}(e_0^1))=\alpha^1_2+\alpha^2_2-\alpha^1_1-\alpha^4_1-\alpha^5_1,$$
$$\Phi_1(\beta_1^2)=(\beta_1^2 \uparrow \textrm{stab}(e_0^2))-(\beta_1^2 \uparrow \textrm{stab}(e_0^1))=\alpha^3_2+\alpha^4_2-\alpha^2_1-\alpha^3_1-\alpha^5_1,$$
$$\Phi_1(\beta_2^1)=(\beta_2^1 \uparrow \textrm{stab}(e_0^0))-(\beta_2^1 \uparrow \textrm{stab}(e_0^2))=\alpha^1_0+\alpha^4_0+\alpha^5_0-\alpha^1_2-\alpha^3_2,$$
$$\Phi_1(\beta_2^2)=(\beta_2^2 \uparrow \textrm{stab}(e_0^0))-(\beta_2^2 \uparrow \textrm{stab}(e_0^2))=\alpha^2_0+\alpha^3_0+\alpha^5_0-\alpha^2_2-\alpha^4_2.$$

Here, the SMF of the matrices of $\Phi_2$ and $\Phi_1$ gives the invariant factors $(1)$ and $(1,1,1,1,1)$, respectively. Then,  $H_2^{\mathcal{F}}(\mathbf{p4m},R_{\mathbb{C}})= 0$, $H_1^{\mathcal{F}}(\mathbf{p4m},R_{\mathbb{C}})=0$ and $H_0^{\mathcal{F}}(\mathbf{p4m},R_{\mathbb{C}})=\Z^9$.

Again a basis of $H_0^{\mathcal{F}}(\mathbf{p4m},R_{\mathbb{C}})$ can be extracted of the last column of the auxiliary matrix $Q$. Such a basis is $([\alpha_0^4],[\alpha_0^5],[\alpha_1^3],[\alpha_1^4],[\alpha_1^5],[\alpha_2^1],[\alpha_2^2],[\alpha_2^3],[\alpha_2^4])$.

\subsection{The group $\mathbf{p4g}$.}

%The \emph{tetragyro group} $\mathbf{p4g}$ admits the presentation $$\{x,y,z,t \textrm{ } | \textrm{ } xyx^{-1}y^{-1}, z^4, t^2, zyz^{-1}x, zxz^{-1}y^{-1}, %txty^{-1}, tztxz \}.$$

Here our representative for the class of equivariant 2-cells will be the triangle whose vertices are the middle point of the left side of the (big) square, the center of that square and the middle point of the lowest side of that square. We will denote these vertices by $O$, $P$ and $Q$ respectively. In particular, we identify $O$ and $P$ with our representatives $e_0^0$ and $e_0^1$ for the two classes of 0-cells, while $Q$ is the image of $O$ under a counterclockwise 4-rotation $t$ whose center is $P$. There are also two classes of equivariant 1-cells, whose representatives $e_1^0$ and $e_1^1$ we respectively identify with $OP$ and $QO$. For the other side of the triangle, we have $QP=t(OP)$. The boundaries are then defined in this way:

$$\partial(e_2)=e_1^0-te_1^0+e_1^1,$$
$$\partial(e_1^0)=e_0^1-e_0^0,$$
$$\partial(e_1^1)=e_0^0-te_0^0,$$

As $e_0^1$ is a center of 4-rotation, its stabilizer is $C_4$. In turn, the stabilizer of $e_0^0$ is generated by a 2-rotation and a reflection, so it is isomorphic to $D_2$. On the other hand, the unique 1-cell with non-trivial isotropy is $e_1^1$, which lies in a reflection axis and then has $C_2$ as stabilizer. Let us write now the Bredon chain complex:

$$0\ra \Z\gamma \ra \Z\beta_0\oplus\Z\beta_1^1\oplus\beta_1^2\ra \bigoplus_{i=1}^4\alpha_0^i\bigoplus_{i=1}^4\alpha_1^i\ra 0.$$

Now we can can compute the differentials, taking account of lines 1, 6 and 7 in Table \ref{Stabilizers} :

$$\Phi_2(\gamma)=(\gamma \uparrow \textrm{stab}(e_1^0))-(\gamma \uparrow \textrm{stab}(e_1^0))+(\gamma \uparrow \textrm{stab}(e_1^1))=\beta_1^1+\beta_1^2,$$
$$\Phi_1(\beta_0)=(\beta_0 \uparrow \textrm{stab}(e_0^1))-(\beta_0 \uparrow \textrm{stab}(e_0^0))=\alpha^1_1+\alpha^2_1+\alpha^3_1+\alpha^4_1-\alpha^1_0-\alpha^2_0-\alpha^3_0-\alpha^4_0,$$
$$\Phi_1(\beta_1^1)=(\beta_1^1 \uparrow \textrm{stab}(e_0^0))-(\beta_1^1 \uparrow \textrm{stab}(e_0^0))=\alpha^1_0+\alpha^2_0-\alpha^1_0-\alpha^3_0=\alpha^2_0-\alpha^3_0,$$
$$\Phi_1(\beta_1^2)=(\beta_1^2 \uparrow \textrm{stab}(e_0^0))-(\beta_1^2 \uparrow \textrm{stab}(e_0^0))=\alpha^3_0+\alpha^4_0-\alpha^2_0-\alpha^4_0=\alpha^3_0-\alpha^2_0.$$

Observe that the two induced characters denoted by $(\beta_1^1 \uparrow \textrm{stab}(e_0^0))$ in the expression of $\Phi_1(\beta_1^1)$ are not the same, because they are induced from different inclusions $\Z/2\hookrightarrow D_4$. Same happens with $\beta_1^2$.

After computing the SMF of the matrices of $\Phi_2$ and $\Phi_1$, we obtain the invariant factors $(1)$ and $(1,1)$, respectively. Hence, we have  $H_2^{\mathcal{F}}(\mathbf{p4g},R_{\mathbb{C}})= 0$, $H_1^{\mathcal{F}}(\mathbf{p4g},R_{\mathbb{C}})=0$ and $H_0^{\mathcal{F}}(\mathbf{p4g},R_{\mathbb{C}})=\Z^6$.

From the matrix $Q$ obtained in the computation of the SMF of the matrix of $\Phi_1$ we conclude that a basis for $H_0^{\mathcal{F}}(\mathbf{p4g},R_{\mathbb{C}})$ is given by $([\alpha_0^1], [\alpha_0^2], [\alpha_0^4], [\alpha_1^1], [\alpha_1^2], [\alpha_1^4])$.

\subsection{The group $\mathbf{p3}$.}

%The \emph{tritrope group} $\mathbf{p3}$ admits the presentation $$\{x,y,z \textrm{ } | \textrm{ } xyx^{-1}y^{-1}, z^3, zxz^{-1}y^{-1}x, zyz^{-1}x \}.$$

For the group $\mathbf{p3}$ a representative $e_2$ for the class of equivariant 2-cells is the rhombus of the picture. We order the vertices clockwise starting from the upper left, and denote them as usual by $P$, $Q$, $R$ and $S$. Then representatives $e_0^0$, $e_0^1$ and $e_0^2$ of the three classes of 0-cells are given by $P$, $Q$ and $R$, and a 3-rotation $t$ around $O$ takes $P$ to $S$. In turn, there are two classes $e_1^0$ and $e_1^1$ of 1-cells, which can be respectively identified with $OP$ and $PQ$. Observe that $t(OP)=t(OS)$ and $t(PQ)=t(SQ)$. We have the boundaries:

$$\partial(e_2)=e_1^0+e_1^1-te_1^1-te_1^0,$$
$$\partial(e_1^0)=e_0^1-e_0^0,$$
$$\partial(e_1^1)=e_0^2-e_0^1.$$

The three vertices are 3-rotation centers, and their stabilizers are isomorphic to $C_3$. On the other hand, the group acts freely in each class of equivariant $1$-cells, and hence the Bredon complex has the form:

$$0\ra \Z\gamma \ra \Z\beta_0\oplus\Z\beta_1 \ra \bigoplus_{i=1}^3\Z\alpha_0^i\bigoplus_{i=1}^3\Z\alpha_1^i\bigoplus_{i=1}^3\Z\alpha_2^i\ra 0.$$

Computing the differentials of the complex, and taking into account line 1 in Table \ref{Stabilizers} we obtain:

$$\Phi_2(\gamma)=(\gamma \uparrow \textrm{stab}(e_1^0))+(\gamma \uparrow \textrm{stab}(e_1^1))-(\gamma \uparrow \textrm{stab}(e_1^1))-(\gamma \uparrow \textrm{stab}(e_1^0))=0,$$
$$\Phi_1(\beta_0)=(\beta_0 \uparrow \textrm{stab}(e_0^1))-(\beta_0 \uparrow \textrm{stab}(e_0^0))=\alpha^1_1+\alpha^2_1-\alpha^1_1-\alpha^3_0=\alpha^2_0-\alpha^3_0,$$
$$\Phi_1(\beta_1)=(\beta_1 \uparrow \textrm{stab}(e_0^2))-(\beta_1 \uparrow \textrm{stab}(e_0^1))=\alpha^1_2+\alpha^2_2+\alpha^3_2-\alpha^1_1-\alpha^2_1-\alpha^3_1.$$

As $\Phi_2$ is trivial, we have $H_2^{\mathcal{F}}(\mathbf{p3},R_{\mathbb{C}})=\Z$. On the other hand, the SNF of the matrix of $\Phi_1$ produces the invariant factors $(1,1)$, whence $H_1^{\mathcal{F}}(\mathbf{p3},R_{\mathbb{C}})=0$ and $H_0^{\mathcal{F}}(\mathbf{p3},R_{\mathbb{C}})=\Z^7.$ From the auxiliary matrix of the SNF we obtain a basis $([\alpha_0^2],[\alpha_0^3],[\alpha_1^1],[\alpha_1^2],[\alpha_1^3],[\alpha_2^1],[\alpha_2^2])$ for $H_0^{\mathcal{F}}(\mathbf{p3},R_{\mathbb{C}})$.

\subsection{The group $\mathbf{p3m1}$.}

%The \emph{trigyro group} $\mathbf{p3m1}$ admits the presentation $$\{x,y,z,t \textrm{ } | \textrm{ } xyx^{-1}y^{-1}, z^3, t^2, (tz)^2, tzxtzx^{-1}, tyty^{-1}, %txty^{-1}x,tzytzyx \}.$$

A representative $e_2$ for the equivalence class of equivariant 2-cells for this action is the equilateral triangle whose vertices are the upper left vertex in the picture of Figure \ref{Doris} and the two closest rotation centers. We name this three vertices as $O$, $P$ and $Q$, starting from the upper left and counting clockwise. The representatives $e_0^0$, $e_0^1$ and $e_0^2$ of the three classes of 1-cells are identified with these three vertices in that order. There are also three classes of 1-cells, $e_1^0$, $e_1^1$ and $e_1^2$, which we identify respectively with $OP$, $PQ$ and $QO$. Now we can describe the boundaries between the cells:

$$\partial(e_2)=e_1^0+e_1^1+e_1^2,$$
$$\partial(e_1^0)=e_0^1-e_0^0,$$
$$\partial(e_1^1)=e_0^2-e_0^1,$$
$$\partial(e_1^2)=e_0^0-e_0^2.$$

As in the previous group, the vertices are centers of 3-rotations, and moreover the sides of the triangle lie on rotation axes. Then, the stabilizers of the 0-cells are isomorphic to $D_3$, and the stabilizers of the 1-cells are isomorphic to $C_2$. So we have the complex:

$$0\ra \Z\gamma \ra \bigoplus_{i=1}^2\beta_0^i\bigoplus_{i=1}^2\beta_1^i\bigoplus_{i=1}^2\beta_2^i\ra \bigoplus_{i=1}^4\alpha_0^i\bigoplus_{i=1}^4\alpha_1^i\bigoplus_{i=1}^4\alpha_2^i\ra 0.$$

Now we can get the differentials of the chain complex, recall lines 1, 8 and 9 of Table \ref{Stabilizers}:

$$\Phi_2(\gamma)=(\gamma \uparrow \textrm{stab}(e_1^0))+(\gamma \uparrow \textrm{stab}(e_1^1))+(\gamma \uparrow \textrm{stab}(e_1^2))=\beta_0^1+\beta_0^2+\beta_1^1+\beta_1^2+\beta_2^1+\beta_2^2.$$
$$\Phi_1(\beta_0^1)=(\beta_0^1 \uparrow \textrm{stab}(e_0^1))-(\beta_0^1 \uparrow \textrm{stab}(e_0^0))=\alpha^1_1+\alpha^3_1-\alpha^1_0-\alpha^3_0,$$
$$\Phi_1(\beta_0^2)=(\beta_0^2 \uparrow \textrm{stab}(e_0^1))-(\beta_0^2 \uparrow \textrm{stab}(e_0^0))=\alpha^2_1+\alpha^3_1-\alpha^2_0-\alpha^3_0,$$
$$\Phi_1(\beta_1^1)=(\beta_1^1 \uparrow \textrm{stab}(e_0^2))-(\beta_1^1 \uparrow \textrm{stab}(e_0^1))=\alpha^1_2+\alpha^3_2-\alpha^1_1-\alpha^3_1,$$
$$\Phi_1(\beta_1^2)=(\beta_1^2 \uparrow \textrm{stab}(e_0^2))-(\beta_1^2 \uparrow \textrm{stab}(e_0^1))=\alpha^2_2+\alpha^3_2-\alpha^2_1-\alpha^3_1,$$
$$\Phi_1(\beta_2^1)=(\beta_2^1 \uparrow \textrm{stab}(e_0^0))-(\beta_2^1 \uparrow \textrm{stab}(e_0^2))=\alpha^1_0+\alpha^3_0-\alpha^1_2-\alpha^3_2,$$
$$\Phi_1(\beta_2^2)=(\beta_2^2 \uparrow \textrm{stab}(e_0^0))-(\beta_2^2 \uparrow \textrm{stab}(e_0^2))=\alpha^2_0+\alpha^3_0-\alpha^2_2-\alpha^3_2.$$

The respective SNF of the matrices of $\Phi_2$ and $\Phi_1$ respectively produce invariant factors $(1)$ and $(1,1,1,1)$. Then the Bredon homology groups are $H_2^{\mathcal{F}}(\mathbf{p3m1},R_{\mathbb{C}})=0$, $H_1^{\mathcal{F}}(\mathbf{p3m1},R_{\mathbb{C}})=\Z$ and $H_0^{\mathcal{F}}(\mathbf{p3m1},R_{\mathbb{C}})=\Z^5$.

The computation of the auxiliary matrices of the SNF in particular show that a basis for $H_1^{\mathcal{F}}(\mathbf{p3m1},R_{\mathbb{C}})$ is given by $[\beta_0^1+\beta_1^1+\beta_2^1]$ and a basis for $H_0^{\mathcal{F}}(\mathbf{p3m1},R_{\mathbb{C}})$ is $([\alpha_0^3],[\alpha_1^1],[\alpha_1^2],[\alpha_1^3],[\alpha_2^3])$.

\subsection{The group $\mathbf{p31m}$.}

%The \emph{tryscope group} $\mathbf{p31m}$ admits the presentation $$\{x,y,z \textrm{ } | \textrm{ } xyx^{-1}y^{-1}, z^3, t^2, (tz)^2, zxz^{-1}y^{-1}x, %zyz^{-1}x, (tx)^2, tyty^{-1}x\}.$$

In the picture we see a rhombus divided in two equilateral triangles. As the representative $e_2$ of the class of $2$-cells it can be taken the triangle T whose vertices are the center and the two lowest vertices of the left triangle of the rhombus. We denote by $O$, $P$ and $Q$ the vertices of T, starting in the one in the left-hand side and counting clockwise. Now, the two classes of 0-cells for the action will have representatives $e_0^0$ and $e_0^1$ identified with $P$ and $Q$, and we take account of the fact that $Q=t(O)$, being $t$ the 3-rotation with center $P$. In turn, there are two classes of $1$-cells, represented by $e_1^0$ and $e_1^1$, which we identify with $OP$ and $QO$. Observe that $t(OP)=QP$. Now we can describe the boundaries here:

$$\partial(e_2)=e_1^0+te_1^0+e_1^1,$$
$$\partial(e_1^0)=e_0^1-e_0^0,$$
$$\partial(e_1^1)=e_0^0-te_0^0.$$

The stabilizer of $e_0^0$ is generated by a 3-rotation and reflection, and hence is isomorphic to $D_3$. In turn, the stabilizer of $e_0^2$ is generated by a 3-rotation and is then isomorphic to $C_3$. On the other hand, the group acts freely over the class of $e_1^0$, while the stabilizer of $e_1^1$ is $C_2$, as this representative lies in a reflection axis. Then the Bredon chain complex is:

$$0\ra \Z\gamma \ra \Z\beta_0\oplus\Z\beta_1^1\oplus\Z\beta_1^2 \ra\bigoplus_{i=1}^3 \Z\alpha_0^i \bigoplus_{i=1}^3 \Z\alpha_1^i\ra 0.$$

Now let us compute the differentials of the complex, taking account of lines 1, 3, 8 and 9 of Table \ref{Stabilizers}:

$$\Phi_2(\gamma)=(\gamma \uparrow \textrm{stab}(e_1^0))-(\gamma \uparrow \textrm{stab}(e_1^0))+(\gamma \uparrow \textrm{stab}(e_1^1))=\beta_1^1+\beta_1^2,$$
$$\Phi_1(\beta_0)=(\beta_0 \uparrow \textrm{stab}(e_0^1))-(\beta_0 \uparrow \textrm{stab}(e_0^0))=\alpha^1_1+\alpha^2_1+\alpha^3_1-\alpha^1_0-\alpha^2_0-\alpha^3_0,$$
$$\Phi_1(\beta_1^1)=(\beta_1^1 \uparrow \textrm{stab}(e_0^0))-(\beta_1^1 \uparrow \textrm{stab}(e_0^0))=0,$$
$$\Phi_1(\beta_1^2)=(\beta_1^2 \uparrow \textrm{stab}(e_0^0))-(\beta_1^2 \uparrow \textrm{stab}(e_0^0))=0.$$

Observe that, unlike what happens in the case of $\mathbf{p4g}$, here the differences $(\beta_1^i \uparrow \textrm{stab}(e_0^0))-(\beta_1^i \uparrow \textrm{stab}(e_0^0)$ are trivial because in $D_3$ there is only one conjugation class of elements of order 2.

Now the computation of the SNF for the two differentials produce the invariant factors $(1)$ and $(1)$ for $\Phi_2$ and $\Phi_1$, respectively. Then, $H_2^{\mathcal{F}}(\mathbf{p31m},R_{\mathbb{C}})=0$, $H_1^{\mathcal{F}}(\mathbf{p31m},R_{\mathbb{C}})=\Z$ and $H_0^{\mathcal{F}}(\mathbf{p31m},R_{\mathbb{C}})=\Z^5$.

The auxiliary matrices of the SNF produce the bases $[\beta_1^1]$ for $H_1^{\mathcal{F}}(\mathbf{p31m},R_{\mathbb{C}})$ and $([\alpha_0^2],[\alpha_0^3],[\alpha_1^1],[\alpha_1^2],[\alpha_1^3])$ for $H_0^{\mathcal{F}}(\mathbf{p31m},R_{\mathbb{C}})$.

\subsection{The group $\mathbf{p6}$.}

%The \emph{hexatropic group} $\mathbf{p6}$ admits the presentation $$\{x,y,z \textrm{ } | \textrm{ } xyx^{-1}y^{-1}, z^6, zxz^{-1}y^{-1}, zyzy^{-1}x \}.$$

The representative $e_2$ the class of equivariant 2-cells by the action of $\mathbf{p6}$ will be exactly the same triangle as in the previous group. We keep the names of the vertices $O$, $P$ and $Q$, and we consider another vertex $R$, the middle point of the lowest side of the triangle. There will be three of classes of equivalence of 0-cells, whose representatives $e_0^0$, $e_0^1$ and $e_0^2$ are identified with $O$, $P$ and $R$. The vertex $Q$ is now the image of $O$ under the 2-rotation $r_1$ centered in $R$ (and also under the 3-rotation $r_2$ centered in $P$). There are also two classes of 1-cells, whose representatives $e_1^0$ and $e_1^1$ are identified respectively with the segments $OP$ and $QR$. Observe that $r_2(OP)=QP$ and $r_1(QR)=OR$. We can now write the boundaries for the cells:

$$\partial(e_2)=e_1^0+r_2e_1^0+e_1^1+r_1e_1^1,$$
$$\partial(e_1^0)=e_0^1-e_0^0,$$
$$\partial(e_1^1)=e_0^2-te_0^0.$$

 Observe that $e_0^0$, $e_0^1$ and $e_0^2$ are respectively $6$-, $3$- and $2$- rotation centers, and their stabilizers are respectively isomorphic to $C_6$, $C_3$ and $C_2$. On the other hand, the group $\mathbf{p6}$ act freely over each of the classes of $1$-cells, so we have the following Bredon chain complex:

$$0\ra \Z\gamma \ra \Z\beta_0\oplus\Z\beta_1 \rightarrow \bigoplus_{i=1}^6 \alpha_0^i\bigoplus_{i=1}^3 \alpha_1^i\bigoplus_{i=1}^2 \alpha_2^i\ra 0.$$

Now we compute the differentials, taking into account line 1 of Table \ref{Stabilizers}:

$$\Phi_2(\gamma)=(\gamma \uparrow \textrm{stab}(e_1^0))-(\gamma \uparrow \textrm{stab}(e_1^0))+(\gamma \uparrow \textrm{stab}(e_1^1))-(\gamma \uparrow \textrm{stab}(e_1^1))=0,$$
$$\Phi_1(\beta_0)=(\beta_0 \uparrow \textrm{stab}(e_0^1))-(\beta_0 \uparrow \textrm{stab}(e_0^0))=\alpha^1_1+\alpha^2_1+\alpha^3_1-\alpha^1_0-\alpha^2_0-\alpha^3_0-\alpha^4_0-\alpha^5_0-\alpha^6_0,$$
$$\Phi_1(\beta_1)=(\beta_1 \uparrow \textrm{stab}(e_0^2))-(\beta_1 \uparrow \textrm{stab}(e_0^0))=\alpha^1_2+\alpha^2_2-\alpha^1_0-\alpha^2_0-\alpha^3_0-\alpha^4_0-\alpha^5_0-\alpha^6_0.$$

As the first homomorphism is trivial, we get $H_2^{\mathcal{F}}(\mathbf{p6},R_{\mathbb{C}})=\Z$. On the other hand, the SNF of the matrix of $\Phi_1$ has $(1,1)$ as invariant factors, and thus $H_1^{\mathcal{F}}(\mathbf{p6},R_{\mathbb{C}})=0$ and $H_0^{\mathcal{F}}(\mathbf{p6},R_{\mathbb{C}})=\Z^9$.

Clearly a basis for $H_2^{\mathcal{F}}(\mathbf{p6},R_{\mathbb{C}})$ is given by $[\gamma]$, while the auxiliary computations of the SNF provide the basis of $H_1^{\mathcal{F}}(\mathbf{p6},R_{\mathbb{C}})$ given by $([\alpha_0^2],[\alpha_0^3],[\alpha_0^4],[\alpha_0^5],[\alpha_0^6],[\alpha_1^2],[\alpha_1^3],[\alpha_2^1],[\alpha_2^2])$.

\subsection{The group $\mathbf{p6m}$.}

%The \emph{monotrope group} $\mathbf{p6m}$ admits the presentation $$\{x,y,z,t \textrm{ } | \textrm{ } xyx^{-1}y^{-1},z^6,t^2,zyz^{-1}y^{-1}x, zxz^{-1}y^{-1}, %(tx)^2, tyty^{-1}x,txty^{-1}z \}.$$

To get our representative $e_2$ for the class of equivariant 2-cells in this group, we take in the previous group $\mathbf{p6}$ the triangle defined by the vertices $O$, $P$ and $R$ there. We keep the name of the vertices here, and identify them in this order with the representatives $e_0^0$, $e_0^1$ and $e_0^2$ of the three equivalence classes of $0$-cells. We also identify the representatives $e_1^0$, $e_1^1$ and $e_1^2$ of the three classes of 1-cells with the segments $OP$, $PR$ and $RO$. Now the boundaries are:

$$\partial(e_2)=e_1^0+e_1^1+e_1^2,$$
$$\partial(e_1^0)=e_0^1-e_0^0,$$
$$\partial(e_1^1)=e_0^2-e_0^1,$$
$$\partial(e_1^2)=e_0^0-e_0^2.$$

Again $e_0^0$, $e_0^1$ and $e_0^2$ are respectively 6-, 3- and 2-rotation centers. In addition, all of them lie in a reflection axis. Hence, the stabilizers of these vertices are respectively $D_6$, $D_3$ and $D_2$. As the representatives of the 1-cells all lie in some reflection axis, their stabilizers are all isomorphic to $C_2$. So we have the following Bredon chain complex:

$$0\ra \Z\gamma \ra \bigoplus_{i=1}^2\beta_0^i\bigoplus_{i=1}^2\beta_1^i\bigoplus_{i=1}^2\beta_2^i\ra \bigoplus_{i=1}^6\alpha_0^i\bigoplus_{i=1}^3\alpha_1^i\bigoplus_{i=1}^4\alpha_2^i\ra 0.$$

We compute the differentials for this complex, taking into account lines 8, 9, 14, 15, 16 and 17 of Table \ref{Stabilizers}:

$$\Phi_2(\gamma)=(\gamma \uparrow \textrm{stab}(e_1^0))+(\gamma \uparrow \textrm{stab}(e_1^1))+(\gamma \uparrow \textrm{stab}(e_1^2)=\beta_0^1+\beta_0^2+\beta_1^1+\beta_1^2+\beta_2^1+\beta_2^2,$$
$$\Phi_1(\beta_0^1)=(\beta_0^1 \uparrow \textrm{stab}(e_0^1))-(\beta_0^1 \uparrow \textrm{stab}(e_0^0))=\alpha^1_1+\alpha^3_1-\alpha^1_0-\alpha^3_0-\alpha^5_0-\alpha^6_0,$$
$$\Phi_1(\beta_0^2)=(\beta_0^2 \uparrow \textrm{stab}(e_0^1))-(\beta_0^2 \uparrow \textrm{stab}(e_0^0))=\alpha^2_1+\alpha^3_1-\alpha^2_0-\alpha^4_0-\alpha^5_0-\alpha^6_0,$$
$$\Phi_1(\beta_1^1)=(\beta_1^1 \uparrow \textrm{stab}(e_0^2))-(\beta_1^1 \uparrow \textrm{stab}(e_0^1))=\alpha^1_2+\alpha^2_2-\alpha^1_1-\alpha^3_1,$$
$$\Phi_1(\beta_1^2)=(\beta_1^2 \uparrow \textrm{stab}(e_0^2))-(\beta_1^2 \uparrow \textrm{stab}(e_0^1))=\alpha^3_2+\alpha^4_2-\alpha^2_1-\alpha^3_1,$$
$$\Phi_1(\beta_2^1)=(\beta_2^1 \uparrow \textrm{stab}(e_0^0))-(\beta_2^1 \uparrow \textrm{stab}(e_0^2))=\alpha^1_0+\alpha^4_0+\alpha^5_0+\alpha^6_0-\alpha^1_2-\alpha^3_2,$$
$$\Phi_1(\beta_2^2)=(\beta_2^2 \uparrow \textrm{stab}(e_0^0))-(\beta_2^2 \uparrow \textrm{stab}(e_0^2))=\alpha^2_0+\alpha^3_0+\alpha^5_0+\alpha^6_0-\alpha^2_2-\alpha^4_2.$$

The calculation of the SNF for $\Phi_2$ and $\Phi_1$ produce the invariant factors $(1)$ and $(1,1,1,1,1)$, respectively. Then we have $H_2^{\mathcal{F}}(\mathbf{p6m},R_{\mathbb{C}})=0$, $H_1^{\mathcal{F}}(\mathbf{p6m},R_{\mathbb{C}})=0$ and $H_0^{\mathcal{F}}(\mathbf{p6m},R_{\mathbb{C}})=\Z^8$.

The auxiliary matrix $Q$ of the SNF permits to identify a basis of $H_0^{\mathcal{F}}(\mathbf{p6m},R_{\mathbb{C}})$ $[\beta_1^1]$, which is given by $([\alpha_0^4],[\alpha_0^5],[\alpha_0^6],[\alpha_1^1],[\alpha_1^3],[\alpha_2^1],[\alpha_2^3],[\alpha_2^4])$.

\begin{remark}

Observe that applying \cite[Theorem 5.27]{MiVa03} to the Bredon homology groups of the wallpaper groups and recalling that the Baum-Connes conjecture holds for these groups, we recover the $K$-theory computations of (\cite{LuSt00}, Section 5). Hence, our results can also be interpreted as a way to approach directly the left-hand side of the conjecture in the case of wallpaper groups.

\end{remark}

\begin{table}[t]
\begin{center}
\begin{tabular}{| c | c | c | c | c | c |}
\hline
Group & $H_2$ & $H_1$ & Basis $H_1$ & $H_0$ & Basis $H_0$ \\ \hline

$\mathbf{p1} $  & $\Z$ & $\Z^2$ & $([\beta_1],[\beta_2])$   & $\Z$ & $[\alpha]$  \\ \hline
$\mathbf{p2} $  & $ \Z $ & $ 0 $ & - & $ \Z^5 $ & $ ([\alpha_0^1], [\alpha_0^2], [\alpha_1^2], [\alpha_2^2], [\alpha_3^2]) $\\ \hline
$\mathbf{pm} $  & $ 0 $ & $ \Z^3 $ & $ ([\beta_1^1], [\beta_1^2], [\beta_2^1]) $ & $ \Z^3 $ & $ ([\alpha_0^2], [\alpha_1^1], [\alpha_1^2]) $\\ \hline
$\mathbf{pg} $  & $ 0 $ & $ \Z/2\oplus \Z $ & $ ([\beta_0], [\beta_1]) $ & $ \Z $ & $ [\alpha] $\\ \hline
$\mathbf{cm} $  & $ 0 $ & $ \Z^2$ & $ ([\beta_1^1],[\beta_1^2]) $ & $ \Z^2 $ & $ ([\alpha^1],[\alpha^2]) $\\ \hline
$\mathbf{pmm} $  & $ 0 $ & $ 0 $ & - & $ \Z^9 $ & $ ([\alpha_0^3], [\alpha_1^3], [\alpha_1^4], [\alpha_2^3], [\alpha_2^4], [\alpha_3^1], [\alpha_3^2], [\alpha_3^3], [\alpha_3^4]) $\\ \hline
$\mathbf{pmg} $  & $ 0 $ & $ \Z $ & $ [\beta_1^1+\beta_2^1] $ & $ \Z^4 $ & $ ([\alpha_1^2], [\alpha_2^1], [\alpha_2^2], [\alpha_3^2]) $\\ \hline
$\mathbf{pgg} $  & $ 0 $ & $ \Z /2 $ & $ [\beta_0] $ & $ \Z^3 $ & $ ([\alpha_1^2], [\alpha_2^1], [\alpha_2^2]) $\\ \hline
$\mathbf{cmm} $  & $ 0 $ & $ 0 $ & - & $ \Z^6 $ & $ ([\alpha_0^1+\alpha_0^2], [\alpha_0^3], [\alpha_0^4], [\alpha_1^1], [\alpha_1^3], [\alpha_2^2]) $\\ \hline
$\mathbf{p4} $  & $ \Z $ & $ 0 $ & - & $ \Z^8 $ & $ ([\alpha_0^1],[\alpha_0^2],[\alpha_0^3],[\alpha_0^4],[\alpha_1^2],[\alpha_2^2],[\alpha_2^3],[\alpha_2^4]) $\\ \hline
$\mathbf{p4m} $ & $ 0 $ & $ 0 $ & - & $ \Z^9 $ & $ ([\alpha_0^4],[\alpha_0^5],[\alpha_1^3],[\alpha_1^4],[\alpha_1^5],[\alpha_2^1],[\alpha_2^2],[\alpha_2^3],[\alpha_2^4]) $\\ \hline
$\mathbf{p4g} $ & $ 0 $ & $ 0 $ & - & $ \Z^6 $ & $ ([\alpha_0^1], [\alpha_0^2], [\alpha_0^4], [\alpha_1^1], [\alpha_1^2], [\alpha_1^4]) $\\ \hline
$\mathbf{p3} $ & $ \Z $ & $ 0 $ & - & $ \Z^7 $ & $ ([\alpha_0^2],[\alpha_0^3],[\alpha_1^1],[\alpha_1^2],[\alpha_1^3],[\alpha_2^1],[\alpha_2^2]) $\\ \hline
$\mathbf{p3m1} $ & $ 0 $ & $ \Z $ & $ [\beta_0^1+\beta_1^1+\beta_2^1] $ & $ \Z^5 $ & $ ([\alpha_0^3],[\alpha_1^1],[\alpha_1^2],[\alpha_1^3],[\alpha_2^3]) $\\ \hline
$\mathbf{p31m} $ & $ 0 $ & $ \Z $ & $ [\beta_1^1] $ & $ \Z^5 $ & $ ([\alpha_0^2],[\alpha_0^3],[\alpha_1^1],[\alpha_1^2],[\alpha_1^3]) $\\ \hline
$\mathbf{p6} $ & $ \Z $ & $ 0 $ & - & $ \Z^9 $ & $ ([\alpha_0^2],[\alpha_0^3],[\alpha_0^4],[\alpha_0^5],[\alpha_0^6],[\alpha_1^2],[\alpha_1^3],[\alpha_2^1],[\alpha_2^2]) $\\ \hline
$\mathbf{p6m} $ & $ 0 $ & $ 0 $ & - & $ \Z^8 $ & $ ([\alpha_0^4],[\alpha_0^5],[\alpha_0^6],[\alpha_1^1],[\alpha_1^3],[\alpha_2^1],[\alpha_2^3],[\alpha_2^4]) $\\ \hline

\end{tabular}
\caption{Bredon homology}
\label{Bredon homology}
\end{center}
\end{table}

\newpage


\begin{thebibliography}{Wallpaper}


\bibitem{AnFo14}
Y. Antol\'{i}n and R. Flores,
{On the classifying space for proper actions of groups with cyclic torsion},
Forum Math., 26 (2014), 271--294.

\bibitem{ADL16}
G. Arone, K. Lesh and W. Dwyer,
{\it Bredon homology of partition complexes.}
Doc. Math. 21 (2016), 1227--1268.

\bibitem{ABGRW}
S. Azzali, S. Browne, M.-P- G\'omez-Aparicio, L. Ruth and H. Wang,
{\it K-homology and K-theory of pure braid groups}
arXiv:210507848.

\bibitem{BuEl16}
A. Bui and G. Ellis,
{\it Computing Bredon homology of groups},
J. Homotopy Relat. Struct. 11 (2016), no. 4, 715--734.

\bibitem{DdSS99}
J. Dermott, M. du Sautoy and G. Smith,
{\it Zeta functions of crystallographic groups},
Proc. Lond. Math. Soc. 79 (1999), 511--534.

\bibitem{TDieck}
T. tom Dieck,
{\it Transformation groups},
in {\em De Gruyter Studies in Mathematics}, vol. 8, Walter de Gruyter, 1987.

\bibitem{FPV17}
R. Flores, S. Pooya and A. Valette,
{\it K-theory and K-homology for the lamplighter groups of finite groups},
Proc. Lond. Math. Soc. 115 (2017), no. 6, 1207--1226.

\bibitem{GM95}
J.P.C. Greenlees and J.P. May,
{\it Equivariant stable homotopy theory}
in {\em Handbook of algebraic topology}, pp. 277--323, North-Holland, Amsterdam, 1995.

\bibitem{Haha70}
B. Hartley and T.O. Hawkes,
{\it Rings, modules and linear algebra}
Chapman and Hall, London, Colchester, 1970.


\bibitem{KMN11}
D. Kochloukova,  C. Mart\'{i}nez-P\'{e}rez and B. Nucinkis,
{\it Cohomological finiteness in Bredon cohomology},
Bull. London Math. Soc. 43 (2011), 124--136.

\bibitem{LORS18}
J.F. Lafont, I. Ortiz, A. Rahm and R. S\'anchez-Garc\'{i}a,
{\it Equivariant K-homology for hyperbolic reflection groups},
Quart. J. Math., 69 (2018), 1475--1505.

\bibitem{Luc05}
W. L\"{u}ck,
{\it Survey on classifying spaces for families of subgroups},
in \emph{Infinite groups: geometric, combinatorial and dynamical aspects}, pp. 269--322, Progr. Math., 248, Birkh\"{a}user, Basel, 2005.

\bibitem{LR05}
W. L\"{u}ck and H. Reich,
\newblock{ \em The Baum-Connes and the Farrell-Jones conjectures in K- and L-theory},
\newblock Handbook of K-theory, volume 2, pp. 703--842, Springer, 2005.

\bibitem{LuSt00}
W. L\"{u}ck and R. Stamm,
{\it Computations of K- and L-theory of cocompact planar groups},
K-Theory 21 (2000), no. 3, 249--292.

\bibitem{Mac85}
G. Mackiw,
{\it Applications of abstract algebra},
John Wiley and Sons Inc., New York, 1985.

\bibitem{Mat}
K. Matthews,
http://www.numbertheory.org/php/smith.html

\bibitem{May96}
J.P. May,
{\it Equivariant homotopy and cohomology theory},
AMS Regional conferences in Mathematics, 91, 1996.

\bibitem{MiVa03}
G. Mislin and A. Valette,
{\it Proper group actions and the Baum-Connes conjecture},
in \emph{Advanced courses in Mathematics}, no. 55, CRM Barcelona, 2003.

\bibitem{Poo19}
S. Pooya,
{\it K-theory and K-homology of the wreath products of finite with free groups},
Illinois J. Math. 63 (2019), no. 2, 317--334.

\bibitem{San08}
R. S\'anchez-Garc\'{i}a,
{\it Bredon homology and equivariant K-homology of $SL(3,Z)$},
J. Pure Appl. Algebra, 212 (2008), 1046--1059.,

\bibitem{Sch78}
D. Schattschneider,
{\it The plane symmetry groups: their recognition and notation}
Amer. Math. Monthly, 85 (1978), 439--450.

\bibitem{Ser77}
J.-P. Serre,
{\it Linear representations of finite groups},
In {\em Graduate Texts in Mathematics}, vol. 42, Springer, 1977.

\bibitem{Yan97}
M. Yang,
{\it Crossed product by finite groups acting on low dimensional complexes and applications},
PhD Thesis, University of Saskatchewan, 1997.

\bibitem{ZVC80}
H. Zieschang, E. Vogt and H.D. Coldewey
{\it Surfaces and planar discontinuous groups},
{\em Lecture Notes in Math.}, vol. 835, Springer, New York, 1980.


\end{thebibliography}
\end{document}